\documentclass[pagesize,paper=A4,11pt,bibliography=totoc,oneside]{scrartcl}

\usepackage{lmodern}
\usepackage[T1]{fontenc}
\usepackage[tracking]{microtype}

\usepackage{graphicx}
\usepackage[dvipsnames]{xcolor}
\usepackage{amssymb,amsmath,amsthm}
\usepackage{bbm}
\usepackage{mathrsfs} % \mathscr{}
\usepackage{thmtools}
\usepackage{booktabs}
\usepackage[numbers,sort&compress]{natbib}
\usepackage[colorlinks=true,linkcolor=red,citecolor=blue]{hyperref}
\usepackage{orcidlink}

\newtheorem{thm}{Theorem}
\newtheorem{proposition}[thm]{Proposition}
\newtheorem{lemma}[thm]{Lemma}
\newtheorem{definition}[thm]{Definition}
\newtheorem{corollary}[thm]{Corollary}

\newtheorem{example}[thm]{Example}
\newtheorem{remark}[thm]{Remark}

\newcommand{\ZZ}{\mathbbm{Z}}
\newcommand{\QQ}{\mathbbm{Q}}
\newcommand{\RR}{\mathbbm{R}}

\newcommand{\defas}{\mathrel{\mathop:}=}
\newcommand{\abs}[1]{\left|#1\right|}
\newcommand{\set}[1]{\{#1\}}
\DeclareMathOperator{\rk}{rk}
\newcommand{\UM}[2]{\mathsf{U}_{#2,#1}} % uniform matroid
\newcommand{\MM}[2]{\mathsf{T}_{#2,#1}} % minimal matroid
\newcommand{\MP}[1]{\mathscr{P}_{#1}} % matroid polytope
\newcommand{\IP}[1]{\mathcal{I}_{#1}} % independent set polytope
\DeclareMathOperator{\conv}{conv}
\newcommand{\uv}{\mathsf{e}} % unit vector

\newcommand{\ehr}{\mathrm{ehr}}
\newcommand{\Tutte}{T}
\newcommand{\HN}{H} % harmonic number

\title{\vspace{-10mm}The Tutte polynomial determines the linear coefficient of the Ehrhart polynomial}
\newcommand{\email}[1]{\href{mailto:#1}{#1}}
\author{%
    \thanks{Mathematical Institute, University of Oxford, OX2 6GG, UK, \email{erik.panzer@maths.ox.ac.uk}}
    Erik Panzer
    \orcidlink{0000-0002-9897-5812}
}

\begin{document}

\maketitle

\begin{abstract}\vspace{-1cm}
    We prove a formula for the linear coefficient of the Ehrhart polynomial in terms of the Tutte polynomial. Furthermore, we characterize the matroids that extremize this Ehrhart coefficient, proving a conjecture of Ferroni.
\end{abstract}

\section{Introduction}
The Tutte polynomial $\Tutte_M\in\ZZ[x,y]$ of a matroid $M$ is the generating function of all subsets $A$ of $M$, counted by their rank $\rk(A)$ and corank $\ell(A)\defas\abs{A}-\rk(A)$:
\begin{equation*}
    \Tutte_M(x,y)=\sum_{A\subseteq M} (x-1)^{\rk(M)-\rk(A)}(y-1)^{\ell(A)}.
\end{equation*}
It encodes a lot of information and it has many applications and interpretations in graph and matroid theory \cite{BrylawskiOxley:TutteApp,Crapo:Tutte,EllisMonaghanMerino:ApplicationsI}. Furthermore, it can be computed rather efficiently \cite{BHKK:TutteVertex,HaagardPearceRoyle:CompTutte}.

The Ehrhart polynomial $\ehr_M\in \QQ[t]$ of a matroid counts the number of lattice points in dilates of the matroid polytope. For a subset $A$ of $M$ let $\uv_{A}\defas\sum_{i\in A}\uv_i\in\{0,1\}^n$ denote the vector with $1$s in the entries labelled by elements of $A$. The matroid polytope
\begin{equation*}
	\MP{M}=\conv\{\uv_B\colon \text{$B$ is a basis of $M$}\}\subset\RR^n
\end{equation*}
is the convex hull of the indicator vectors of all bases. It is a lattice polytope in $\RR^n$ with $n=\abs{M}$ the size of (the ground set of) $M$. Now $\ehr_M$ is the unique polynomial such that
\begin{equation*}
	\ehr_M(k)=\abs{\ZZ^n \cap k\MP{M}}
\end{equation*}
for every integer $k\geq 0$. An intriguing conjecture from \cite{DeLoeraHawsKoeppe:Ehrhart} states that all coefficients of $\ehr_M$ should be non-negative. While this was recently disproved \cite{Ferroni:MatNotEhr}, it stimulated intensive research on matroid Ehrhart polynomials and many interesting results have been established; see e.g.\ \cite{FerroniMoralesPanova:EhrPosLat,Ferroni:HyperPositive,FanLi:EhrhartSchubert} and references therein.
%For example, it is known that the linear coefficient $\ehr'_M(0)$ is always non-negative, see \cite[Theorem~4.5]{JochemkoRavichandran:PermuEhrPos} or \cite[Theorem~7.5]{CastilloLiu:ToddPermuto}.

Our main result is a combinatorial formula for the linear coefficient of the Ehrhart polynomial. Let $\HN_k=1+\frac{1}{2}+\ldots+\frac{1}{k}$ denote the harmonic numbers, setting $H_0\defas 0$.
\begin{thm}\label{thm:Dehr0}
    For every matroid on $n\geq1$ elements, we have the identity
    \begin{equation}\label{eq:ehr0-binom}
        \ehr_M'(0)
%        = \sum_{i,j} iR_{ij} (-1)^{r+i-j} \left.\frac{\td}{\td x}\right|_{x=0} \binom{x+\ell-1+i-j}{n-1}
%        = rH_{n-1}-\sum_{\substack{i,j\\-\ell< i-j<r\phantom{-}}} \frac{i}{(n-1)\binom{n-2}{\ell-1+i-j}} R_{ij}
        = \rk(M) \HN_{n-1}-\sum_{\emptyset \subsetneq A \subsetneq M} \frac{\rk(M)-\rk(A)}{(n-1)\binom{n-2}{\abs{A}-1}}
        \ .
    \end{equation}
\end{thm}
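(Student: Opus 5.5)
Our approach is to express $\ehr_M(k)$ as an inclusion--exclusion over subsets using the inequality description of the matroid polytope, extract the derivative at $0$, and simplify. The description we use is
\[
\MP{M}=\set{x\in[0,1]^n\colon \textstyle\sum_i x_i=r,\ \sum_{i\in A}x_i\le \rk(A)\ \text{for all }A},
\]
with $r=\rk(M)$.

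A cleaner route, however, goes through a decomposition valid for all matroids: the valuative property. The map $M\mapsto \ehr_M$ is a valuation on matroid polytopes, and so is the right-hand side of \eqref{eq:ehr0-binom}. The latter holds because it is a linear combination of the indicator functionals $M\mapsto[\rk_M(A)=s]$, and these are known to be valuative (Derksen--Fink). Derksen--Fink also show that every valuative invariant is determined by its values on Schubert matroids. Since both sides are linear in valuations, it therefore suffices to verify \eqref{eq:ehr0-binom} on Schubert matroids, or even just on a spanning family such as the minimal matroids $\MM{r}{n}$ together with uniform matroids and their direct sums.

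Before that reduction we check behaviour under direct sums, to allow reduction to connected matroids. If $M$ is disconnected, $\MP{M}$ is a product, so $\ehr_M=\ehr_{M_1}\ehr_{M_2}$ and the linear coefficient is additive, since both polynomials have constant term $1$. We then need the right-hand side to be additive as well. If that is awkward, we skip this step and rely on valuativity directly.

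The base computation concerns Schubert (lattice path) matroids. Their polytopes are described by cumulative constraints $\sum_{i\le j}x_i\le c_j$, and lattice points of $k\MP{M}$ correspond to integer sequences with bounded partial sums. We count these by a reflection/lattice-path formula and differentiate at $k=0$. The harmonic numbers arise naturally here: $\frac{d}{dk}\binom{k+m}{m}\big|_{k=0}=H_m$. The terms $\binom{n-2}{\abs{A}-1}^{-1}$ should arise from $\frac{d}{dk}$ of a binomial $\binom{k\cdot(\text{stuff})+a}{n-1}$ at $k=0$. At $k=0$ such a binomial vanishes when $0\le a<n-1$, so its derivative equals $\pm\frac{1}{(n-1)\binom{n-2}{a}}$ up to sign.

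This suggests a stronger intermediate statement. We would write $\ehr_M(k)$ as a signed sum over flags, or over subsets $A$, of binomials $\binom{k(r-\rk A)+\ell(A)+\dots}{n-1}$, which is the form hinted at by the commented formula involving $\binom{x+\ell-1+i-j}{n-1}$. To obtain it, we apply inclusion--exclusion / Brion-type decomposition to the hypersimplex-like slices, then differentiate termwise.

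We expect the main obstacle to be establishing this explicit binomial-sum formula for $\ehr_M$ in general, or equivalently verifying \eqref{eq:ehr0-binom} on the Schubert-matroid basis, including the delicate sign and boundary terms $A=\emptyset$ and $A=M$. Those boundary terms produce the $rH_{n-1}$ term rather than a reciprocal binomial. As a first sanity check we would verify the formula on $\UM{r}{n}$ using Katzman's formula for hypersimplex Ehrhart polynomials.
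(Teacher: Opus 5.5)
Your reduction via valuativity is legitimate. Lattice-point counting in $k\MP{M}$ respects linear relations among indicator functions, so $M\mapsto\ehr_M$ is valuative. The right-hand side is a combination of Tutte coefficients, hence also valuative. By Derksen--Fink, Schubert matroids form a basis of the valuative invariants in each size and rank.

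The gap is that the proposal stops exactly where a proof would have to start. You never compute $\ehr'_M(0)$ for a Schubert matroid, nor show that it matches the right-hand side. ``Count by a reflection formula and differentiate'' is not carried out, and it is not routine: lattice points of $k\MP{M}$ must satisfy the box constraints $0\le x_i\le k$ together with the partial-sum bounds. You flag this yourself as the main obstacle, and it is in fact the whole content.

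Two of your fallbacks would also fail:
\begin{itemize}
\item Minimal matroids, uniform matroids and their direct sums are not a spanning family. The valuative invariants of size $n$ and rank $r$ form a space of dimension $\binom{n}{r}$, whereas the number of such matroids grows far more slowly (subexponentially for $r\approx n/2$). So agreement on them proves nothing.
\item No formula can express $\ehr_M(k)$ as a sum over subsets $A$ of terms depending only on $k$, $\abs{A}$ and $\rk(A)$. Such a formula would make the entire Ehrhart polynomial a Tutte invariant, contradicting the Tutte-equivalent matroids with different Ehrhart polynomials found by De Loera--Haws--K\"oppe.
\end{itemize}
Any successful argument must therefore use something specific to the \emph{linear} coefficient.

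That specific property is the missing idea. The linear Ehrhart coefficient is additive under Minkowski sums of lattice polytopes: $\ehr'_{sP+tQ}(0)=s\,\ehr'_P(0)+t\,\ehr'_Q(0)$ (B\"or\"oczky--Ludwig). The matroid polytope is a signed Minkowski sum of simplices, $\MP{M}=\sum_{A}\widetilde{\beta}(M/A)\,\Delta_{M\setminus A}$ (Ardila--Benedetti--Doker), and $\ehr'_{\Delta_I}(0)=\HN_{\abs{I}-1}$. Together these give immediately $\ehr'_M(0)=\sum_{\abs{A}\le n-2}\HN_{n-\abs{A}-1}\widetilde{\beta}(M/A)$.

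The paper then proceeds in four steps:
\begin{itemize}
\item It dualizes the beta invariant, $\beta(M/A)=\beta(M^\star|_{M\setminus A})$, using $\ehr_M=\ehr_{M^\star}$.
\item It expands $\beta$ as an alternating sum of ranks and exchanges the order of summation.
\item It evaluates the inner sum $\sum_{A\supseteq B}(-1)^{\abs{A\setminus B}}\HN_{\abs{A}-1}=-1/\bigl(b\binom{n-1}{b}\bigr)$ with $b=\abs{B}$. This produces exactly the weights $1/\bigl((n-1)\binom{n-2}{b-1}\bigr)$.
\item It uses $\sum_{b=1}^{n-1} r\binom{n}{b}/\bigl((n-1)\binom{n-2}{b-1}\bigr)=2r\HN_{n-1}$ to turn $-r\HN_{n-1}$ plus the weighted rank sum into the stated form.
\end{itemize}
No Ehrhart polynomial other than that of a simplex is ever needed. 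This also explains why it is the linear coefficient, and essentially only it, that the Tutte polynomial determines.
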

Note that the right-hand side is a linear combination of coefficients of the Tutte polynomial: If we define $R_{ij}(M)\in\ZZ$ so that $\Tutte_M(1+x,1+y)=\sum_{i,j} R_{ij} x^i y^j$, then
\begin{equation*}
    \sum_{\abs{A}=k} \big(\rk(M)-\rk(A)\big)=\sum_{i} i R_{i,k+i-\rk(M)}\,.
\end{equation*}\vspace{-5mm}
\begin{example}\label{ex:ehr0-UM}
    The uniform matroid $M=\UM{n}{r}$ of size $n$ and rank $r=n-\ell$ has $R_{i0}=\binom{n}{\ell+i}$, $R_{0j}=\binom{n}{r-j}$ and all other $R_{ij}=0$. The sum in Theorem~\ref{thm:Dehr0} is easily evaluated to
    \begin{equation*}
        \ehr'_{\UM{n}{r}}(0) 
        =n\HN_n-r\HN_r-\ell \HN_\ell
        .
        %= \sum_{i=1}^{r}i\binom{n}{\ell+i}(-1)^{r-i}\left.\frac{\td}{\td x}\right|_{x=0} \binom{x+\ell-1+i}{n-1}.
    \end{equation*}
    We confirmed that taking the derivative at zero of the formula for $\ehr_{\UM{n}{r}}(t)$---as given in \cite[Corollary~2.2]{Katzman:HilbertVeronese} or \cite[Theorem~2.1]{Ferroni:HyperPositive}---can be simplified to the same expression.
\end{example}

Two matroids can share the same Tutte polynomial and yet have different Ehrhart polynomials \cite[\S3]{DeLoeraHawsKoeppe:Ehrhart}. %\todo{There is something wrong with table 1 in loc. cit.---their linear coefficients differ for matroids with same Tutte!}
Theorem~\ref{thm:Dehr0} thus reveals a surprising simplicity. %, and it is specific to the \emph{linear} Ehrhart coefficient. In contrast, higher coefficients of $\ehr_M$ are not linear combinations of $R_{ij}$'s (see \S\ref{sec:calc}).
Our result complements the handful of previously known linear (or constant) relations between Ehrhart and Tutte polynomial coefficients:
\begin{itemize}
    \item Recall that $\Tutte_M(1,1)$ counts the bases of $M$ (hence the vertices of $\MP{M}$). From the definitions it thus follows that, for every matroid,
\begin{equation*}
    \ehr_M(0) = 1 \qquad\text{and}\qquad
    \ehr_M(1) = \Tutte_M(1,1).
\end{equation*}
    \item Ehrhart-Macdonald reciprocity \cite{Breuer:InvitationEhrhart} identifies $\ehr_M(-k)$ with the number of lattice points in the relative interior of $k\MP{M}$. For a connected matroid on $n\geq2$ elements, the dimension of $\MP{M}$ is positive, and so since all lattice points of $\MP{M}$ are vertices,
\begin{equation*}
    \ehr_M(-1) = 0.
\end{equation*}
\item Let $\beta(M)=\partial_x|_{x=0} \Tutte_M(x,0)\in \ZZ_{\geq 0}$ denote Crapo's invariant \cite{Crapo:HigherInvariant}. For every matroid with rank $r$ that has no loops or coloops, it was shown in \cite{CDBFLRVM:EhrhartBeta} that
\begin{equation*}
    \ehr'_M(-1) = \frac{\beta(M)}{(n-1)\binom{n-2}{r-1}} \,.
\end{equation*}
\end{itemize}
Computer calculations indicate that the five relations above (including Theorem~\ref{thm:Dehr0}) are the only linear relations between Ehrhart and Tutte coefficients that hold for all connected matroids (not counting Brylawski's linear identities \cite{BekeCsajiCsikvariPituk:ShortByrlawski} between coefficients of $\Tutte_M$ themselves). %; see \S\ref{sec:calc}.
In fact we first discovered the existence of an additional 5th relation by calculating the dimensions of the vector spaces of valuative functions spanned by the Tutte and Ehrhart coefficients. This experiment was motivated by the findings of \cite{CDBFLRVM:EhrhartBeta}.

We apply our theorem to characterize the extrema of the linear Ehrhart coefficient:
\begin{corollary}\label{cor:0-uniform}
    For every matroid $M$ of size $n$ and rank $r=n-\ell$, we have the inequalities
    \begin{equation*}
        0 %=\ehr'_{\UM{r}{r}\oplus\UM{\ell}{0}}(0)
        \leq\ehr'_M(0)
        \leq 
        %\ehr_{\UM{n}{r}}'(0) =
        n\HN_n-r\HN_r-\ell \HN_\ell.
    \end{equation*}
    Furthermore, the lower bound is achieved if and only if $M\cong \UM{r}{r}\oplus\UM{\ell}{0}$, and the upper bound is achieved if and only if $M\cong\UM{n}{r}$.
\end{corollary}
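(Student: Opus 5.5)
We prove Corollary~\ref{cor:0-uniform} by rewriting the formula of Theorem~\ref{thm:Dehr0} so that it is visibly monotone in the rank function. Group the subsets by size. For fixed $k=\abs{A}$ with $1\leq k\leq n-1$, set
\begin{equation*}
    S_k(M)\defas\sum_{\abs{A}=k}\big(\rk(M)-\rk(A)\big).
\end{equation*}
Using $(n-1)\binom{n-2}{k-1}=k\binom{n-1}{k}$, Theorem~\ref{thm:Dehr0} becomes
\begin{equation*}
    \ehr'_M(0)=r\HN_{n-1}-\sum_{k=1}^{n-1}\frac{S_k(M)}{k\binom{n-1}{k}}.
\end{equation*}
The only dependence on $M$ beyond $n$ and $r$ is through the numbers $S_k(M)$, which enter with negative weights. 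Hence it suffices to bound each $S_k(M)$ from below and from above in terms of $n,r,k$, and to identify when these bounds are attained.

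For the upper bound on $\ehr'_M(0)$ we need a lower bound on $S_k$. Since $\rk(A)\leq\min(k,r)$, we get $\rk(M)-\rk(A)\geq\max(r-k,0)$. Equality holds for all $A$ of all sizes exactly when every set of size at most $r$ is independent, that is, exactly when $M\cong\UM{n}{r}$. (The case $k=r$ alone already forces every $r$-set to be a basis.) Therefore $\ehr'_M(0)\leq\ehr'_{\UM{n}{r}}(0)$, with equality if and only if $M\cong\UM{n}{r}$, because every weight $1/(k\binom{n-1}{k})$ is strictly positive. The value $n\HN_n-r\HN_r-\ell\HN_\ell$ is then read off from Example~\ref{ex:ehr0-UM}.

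For the lower bound we need an upper bound on $S_k$. Submodularity gives $\rk(A)\geq r-\abs{M\setminus A}$, so $\rk(M)-\rk(A)\leq\min(n-k,r)$. This alone is not sharp for $\UM{r}{r}\oplus\UM{\ell}{0}$, which has $\rk(M)-\rk(A)=\abs{B\setminus A}$ for its unique basis $B$. The better tool is averaging. Fix any basis $B$. Then $\rk(A)\geq\abs{A\cap B}$, so
\begin{equation*}
    \rk(M)-\rk(A)\leq\abs{B\setminus A},
\end{equation*}
and summing over $\abs{A}=k$ gives
\begin{equation*}
    S_k(M)\leq r\binom{n-1}{k},
\end{equation*}
since each element of $B$ lies outside $\binom{n-1}{k}$ of the $k$-sets. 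The resulting lower bound is
\begin{equation*}
    \ehr'_M(0)\geq r\HN_{n-1}-r\sum_{k=1}^{n-1}\frac{1}{k}=0.
\end{equation*}
Equality requires $\rk(A)=\abs{A\cap B}$ for every proper nonempty $A$. Taking $A=\set{e}$ with $e\notin B$ shows that every element outside $B$ is a loop, so $M\cong\UM{r}{r}\oplus\UM{\ell}{0}$. When $n\geq2$, taking $A=\{e\}$ for $e\notin B$ is allowed since $A$ is proper; the edge cases $r=0$ or $\ell=0$ are trivial. Conversely, this matroid satisfies $\rk(A)=\abs{A\cap B}$ for all $A$, so it attains $0$.

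Two points need care. The first is the case $n=1$, where Theorem~\ref{thm:Dehr0} gives $\ehr'_M(0)=0$ and both bounds coincide; this has to be checked directly. The second, which we expect to be the main obstacle, is the equality characterization in the lower bound: one must confirm that the bound $\rk(A)\geq\abs{A\cap B}$, summed over all $k$, forces every non-basis element to be a loop, which the singleton argument above handles. The upper-bound characterization only needs the sets of size $r$, so it is routine.
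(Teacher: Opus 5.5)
Your proof is correct and follows the paper's argument. You rewrite Theorem~\ref{thm:Dehr0} in terms of the size-graded sums $S_k(M)=a_k(M)$, bound them below by $\max(r-k,0)\binom{n}{k}$ (as in Lemma~\ref{lem:ak-lower}, with the $k=r$ term forcing $M\cong\UM{n}{r}$), and bound them above by $r\binom{n-1}{k}$ via a fixed basis $B$ and $\rk(A)\geq\abs{A\cap B}$ (as in Lemma~\ref{lem:ak-upper}). The only deviation is in the lower-bound equality case: you use $k=1$ to conclude that every element outside $B$ is a loop, whereas the paper uses basis exchange to show that equality at any single $0<k<n$ already forces a unique basis. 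That is a stronger statement than the corollary needs, so your simpler shortcut suffices.
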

The lower bound $\ehr_M'(0)\geq 0$ was known before: \cite[Theorem~4.5]{JochemkoRavichandran:PermuEhrPos} and \cite[Theorem~7.5]{CastilloLiu:ToddPermuto} proved that the linear Ehrhart coefficient is non-negative for all generalized permutohedra. These results are more general, but for the case of matroid polytopes, our proof is simpler.

The upper bound of Corollary~\ref{cor:0-uniform} proves a special case of \cite[Conjecture~1.5]{Ferroni:EhrhartMinimal}. That conjecture claims that every coefficient of $\ehr_M$ is bounded from above by the corresponding coefficient for the uniform matroid $\UM{n}{r}$ with the same size and rank as $M$. Proofs have been found when $M$ is a sparse paving Schubert matroid \cite[Theorem~1.10]{FanLi:EhrhartSchubert}, a paving matroid \cite[Theorem~1.4]{DMV:PanhandlePavingChain}, or any lattice path matroid \cite[Theorem~4.1]{FerroniMoralesPanova:EhrPosLat}. Our result extends these---albeit only for the linear Ehrhart coefficient---to all matroids.

Furthermore, \cite[Conjecture~1.5]{Ferroni:EhrhartMinimal} proposed that for \emph{connected} matroids $M$, each coefficient of $\ehr_M$ is bounded from below by the corresponding coefficient for the minimal matroid $\MM{n}{r}$ with the same size and rank as $M$. These matroids $\MM{n}{r}=\UM{r+1}{r}\oplus_2 \UM{\ell+1}{1}$ are the unique matroids that achieve the minimal number $n\ell+1$ of bases among all connected matroids with size $n$ and rank $1\leq r<n$, see \cite[Theorem~5]{Murty:NumberBases} or \cite[Theorem~3.2]{Dinolt:ExNonSep}.\footnote{Murty denotes $\MM{r}{n}$ as $F_{r,n}$ and showed that they are graphic: they arise from the cycle graph on $r+1$ vertices with one edge replaced by $\ell$ parallel edges \cite[Figure~1]{Murty:NumberBases}. See also \cite[\S2]{Ferroni:EhrhartMinimal}.}

 This lower bound of \cite[Conjecture~1.5]{Ferroni:EhrhartMinimal} fails in general \cite{Ferroni:MatNotEhr}. However, if we consider only the \emph{linear} Ehrhart coefficient, we can prove it for all (connected) matroids:
\begin{proposition}\label{prop:MM}
    For all connected matroids $M$ of size $n\geq 2$ and rank $r$, we have
    \begin{equation*}
        \HN_{r-1} + \HN_{\ell-1} + \frac{1}{(n-1)\binom{n-2}{r-1}} \leq \ehr_M'(0).
    \end{equation*}
    Moreover, equality holds if and only if $M\cong \MM{n}{r}$.
\end{proposition}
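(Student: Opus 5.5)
The plan is to combine Theorem~\ref{thm:Dehr0} with a coefficientwise comparison of Tutte polynomials. Write $\Tutte_M(x,y)=\sum_{i,j} b_{ij}x^iy^j$, where the $b_{ij}\geq 0$ count bases by internal and external activity. Since $R_{ij}$ is obtained from the $b_{ij}$ by the substitution $x\mapsto 1+x$, $y\mapsto 1+y$, the right-hand side of \eqref{eq:ehr0-binom} is a fixed linear form in the $b_{ij}$. So for fixed $n$ and $r$ there are rational numbers $c_{ij}=c_{ij}(n,r)$ with
\begin{equation*}
    \ehr'_M(0)=\sum_{i,j} c_{ij}\, b_{ij}(M).
\end{equation*}
I expect the lower bound $\ehr'_M(0)\geq 0$ of Corollary~\ref{cor:0-uniform} to come from showing $c_{ij}\geq 0$ for all $(i,j)$, with $c_{ij}=0$ only for the monomial $x^r y^\ell$ of $\UM{r}{r}\oplus\UM{\ell}{0}$. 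I would reuse that computation here. The first step is to write $c_{ij}$ in closed form, substituting $\binom{i}{p}\binom{j}{q}$ expansions into \eqref{eq:ehr0-binom}. The weight $1/((n-1)\binom{n-2}{k-1})$ equals the Beta integral $\int_0^1 t^{k-1}(1-t)^{n-k-1}\,dt$, which should turn these sums into integrals of manifestly nonnegative integrands.

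The second step is the coefficientwise minimality claim: for every connected $M$ of size $n$ and rank $r$,
\begin{equation*}
    \Tutte_M - \Tutte_{\MM{n}{r}} \in \ZZ_{\geq 0}[x,y],
\end{equation*}
with equality only if $M\cong\MM{n}{r}$. I would compute $\Tutte_{\MM{n}{r}}$ explicitly from the 2-sum $\UM{r+1}{r}\oplus_2\UM{\ell+1}{1}$, using the formula for Tutte polynomials of 2-sums together with $\Tutte_{\UM{r+1}{r}}=x^r+\dots+x+y$ and $\Tutte_{\UM{\ell+1}{1}}=x+y+\dots+y^\ell$. The minimality itself I would prove by induction on $n$. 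By Tutte's theorem, a connected $M$ with $n\geq 3$ has an element $e$ such that $M\setminus e$ or $M/e$ is connected. Then $\Tutte_M=\Tutte_{M\setminus e}+\Tutte_{M/e}$, and the term belonging to the connected minor is bounded by the induction hypothesis. The other term has nonnegative coefficients, and it must contribute at least the monomials missing from $\Tutte_{\MM{n-1}{r}}$ or $\Tutte_{\MM{n-1}{r-1}}$. For this I would use the recursion $\Tutte_{\MM{n}{r}}=\Tutte_{\MM{n-1}{r}}+x^{r-1}+\dots$ (respectively $+\dots+y^{\ell-1}$), and the fact that any loopless, coloopless minor has at least these extremal monomials, via $b_{r0},b_{0\ell}\geq1$-type activity arguments. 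Alternatively, this minimality may already be available in the literature on minimal matroids, and I would cite it if so.

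Given both steps, the inequality follows at once:
\begin{equation*}
    \ehr'_M(0)-\ehr'_{\MM{n}{r}}(0)=\sum_{i,j} c_{ij}\bigl(b_{ij}(M)-b_{ij}(\MM{n}{r})\bigr)\geq 0.
\end{equation*}
Evaluating $\ehr'_{\MM{n}{r}}(0)$ directly from \eqref{eq:ehr0-binom} or from the $c_{ij}$ should give $\HN_{r-1}+\HN_{\ell-1}+\frac{1}{(n-1)\binom{n-2}{r-1}}$. The last summand reflects the single $\beta$-monomial $x$ or $y$ of a 2-sum.

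For equality, suppose the Tutte polynomials differ. Then $M$ has some excess monomial $x^iy^j$, and it suffices that $c_{ij}>0$. For a connected matroid the monomial $x^ry^\ell$ never appears, so the only zero coefficient is avoided and strict inequality follows. Hence equality forces $\Tutte_M=\Tutte_{\MM{n}{r}}$. In particular $M$ then has $\Tutte_M(1,1)=n\ell+1$ bases, and the uniqueness theorem of Murty and Dinolt gives $M\cong\MM{n}{r}$.

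I expect the main obstacle to be the second step, the coefficientwise Tutte minimality with its equality case. The bookkeeping of which monomials the non-connected minor must supply is delicate. If it resists, my fallback is to work directly with \eqref{eq:ehr0-binom}: I would bound $\sum_{\abs{A}=k}(\rk(M)-\rk(A))$ above by its value for $\MM{n}{r}$ for each $k$ separately, again by deletion–contraction induction on connected matroids.
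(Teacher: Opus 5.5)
Your second step is false, and the argument depends on it: connected matroids do not dominate $\MM{n}{r}$ coefficientwise. Carrying out your 2-sum computation gives
\[
  \Tutte_{\MM{n}{r}}=\sum_{i=1}^{r}x^i+\sum_{j=1}^{\ell}y^j+\sum_{i=1}^{r-1}\sum_{j=1}^{\ell-1}x^iy^j ,
\]
so for $2\le r\le n-2$ every mixed monomial $x^iy^j$ with $1\le i\le r-1$ and $1\le j\le \ell-1$ occurs. The connected uniform matroid $\UM{n}{r}$ has $R_{ij}=0$ for all $i,j\geq1$ (see \eqref{eq:Urn-Rij}), so $\Tutte_{\UM{n}{r}}$ has no mixed monomials at all. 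The smallest instance is $\Tutte_{\UM{4}{2}}=x^2+2x+2y+y^2$ against $\Tutte_{\MM{4}{2}}=x^2+x+xy+y+y^2$ (a triangle with one edge doubled). Their difference $x+y-xy$ has a negative coefficient. So the termwise comparison $\sum c_{ij}(b_{ij}(M)-b_{ij}(\MM{n}{r}))\ge 0$ breaks down as soon as $c_{11}>0$, which your equality argument requires. This failure is not an edge case: $\UM{n}{r}$ is the maximizer of $\ehr'_M(0)$ by Corollary~\ref{cor:0-uniform}, so the true inequality must involve cancellation between different $b_{ij}$. Step~1 is not automatic either. If you write the constant $r\HN_{n-1}$ as $r\HN_{n-1}\sum_q b_{rq}$ and substitute literally, then for $n=3$, $r=2$ you get $\ehr'_M(0)=2b_{20}-\tfrac12 b_{10}-b_{11}$. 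Nonnegative weights could therefore only come from adding suitable linear relations among the $b_{ij}$.

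Your fallback is the route that actually works, and it is the paper's proof, but it is where all the substance lies. Compare only the aggregated numbers $a_k(M)=\sum_{\abs{A}=k}(r-\rk(A))$. These enter \eqref{eq:ehr0-ak} with strictly negative weights, and they do satisfy $a_k(M)\le a_k(\MM{n}{r})$ for every $k$; for example $a_2(\UM{4}{2})=0<1=a_2(\MM{4}{2})$. In the deletion--contraction induction, Tutte's theorem makes one of $M\setminus e$ and $M/e$ connected. The other minor is only known to be loopless (if it is $M\setminus e$) or coloopless (if it is $M/e$). So you first need two sharp bounds, each with an equality clause:
\begin{itemize}
\item[(a)] $a_k(N)\le a_k(\UM{r-1}{r-1}\oplus\UM{\ell+1}{1})$ for loopless $N$ of size $n$ and rank $r$;
\item[(b)] dually, $a_k(N)\le a_k(\UM{r+1}{r}\oplus\UM{\ell-1}{0})$ for coloopless $N$ of size $n$ and rank $r$.
\end{itemize}
The equality case then comes from $a_2$ alone. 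Saturation forces, say, $M/e\cong\UM{r}{r-1}\oplus\UM{\ell-1}{0}$, so $e$ lies in a parallel class of size $\ell$ whose complement is a basis. A short circuit argument then gives $M\cong\MM{n}{r}$, without any appeal to the Murty--Dinolt theorem.
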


We give a quick proof of Theorem~\ref{thm:Dehr0} in \S\ref{sec:ehr0-tutte} using linearity of the Ehrhart coefficient over Minkowski sums. Our proofs of Corollary~\ref{cor:0-uniform} and Proposition~\ref{prop:MM} in \S\ref{sec:bounds} are inductive and exploit the contraction-deletion recursion of the Tutte polynomial---similar to the strategy employed in \cite{Murty:NumberBases}.

Throughout this paper, binomial coefficients $\binom{n}{k}$ with $k>n$ are defined to be $0$.

\subsection*{Acknowledgments}
I thank Alex Dong and Lyn Risso for upcoming joint work on the calculation of matroid Ehrhart polynomials, and Lyn Risso also for computer experiments suggesting the minimizer in the loopless case of Corollary~\ref{cor:ehr0-coloopless}.
Erik Panzer is funded as a Royal Society University Research Fellow through grant {URF{\textbackslash}R{\textbackslash}251041}.
For the purpose of Open Access, the author has applied a CC BY public copyright licence to any Author Accepted Manuscript version arising from this submission.
No generative AI tools were used at any stage during the research and writing of this paper.

\newpage

\section{Proof of the main theorem}\label{sec:ehr0-tutte}
Following \cite[\S4]{JochemkoRavichandran:PermuEhrPos}, we exploit that the linear coefficient $\ehr'_M(0)$ of the Ehrhart polynomial is a linear function with respect to the Minkowski sum $P+Q=\set{p+q\colon p\in P, q\in Q}$ of lattice polytopes \cite[Corollary~23]{BoroczkyLudwig:MinkowskiLattice}: For any lattice polytopes $P,Q$ and integers $s,t\geq 0$,
\begin{equation*}
    \ehr_{sP+tQ}'(0) = s\cdot \ehr_P'(0) + t\cdot \ehr_Q'(0)
    .
\end{equation*}
As a generalized permutohedron, a matroid polytope has a unique representation as a signed Minkowski sum of simplices.
Let $\widetilde{\beta}(M)=(-1)^{\rk(M)+1}\beta(M)\in\ZZ$ denote the signed version of Crapo's invariant \cite{Crapo:HigherInvariant},
\begin{equation*}%\label{eq:beta}
  \beta(M)=(-1)^{\rk(M)} \sum_{A\subseteq M} (-1)^{\abs{A}} \rk(A).
\end{equation*}
Denote the standard simplex in coordinates $I\subseteq\{1,\ldots,n\}$ by $\Delta_I=\conv\{\uv_i\colon i\in I\}\subset \RR^n$. Then \cite[Theorem~2.5]{ArdilaBenedettiDoker:MatroidVolumes} showed that
\begin{equation*}
	\MP{M} = \sum_{A\subset M} \widetilde{\beta}(M/A) \Delta_{M\setminus A}.
\end{equation*}
It does not matter if $A=M$ is included in this sum, since $\widetilde{\beta}(\emptyset)=0$ for the empty matroid. Summands with size $\abs{A}=n-1$ only contribute when their complement $\set{i}=M\setminus A$ is a coloop of $M$ (otherwise, $M/A$ will be a loop and $\widetilde{\beta}(M/A)=0$). In this case, $\widetilde{\beta}(M/A)=1$ so the effect of the summand $A$ is to translate the polytope $\MP{M}$ by the vector $\Delta_{M\setminus A}=\set{\uv_i}$. However, a translation does not affect the Ehrhart polynomial.

The values of $\ehr'_{P}(0)$ for simplices $P=\Delta_{M\setminus A}$ are well-known \cite[Proposition~4.3]{JochemkoRavichandran:PermuEhrPos}. To summarize, \cite[Corollary~23]{BoroczkyLudwig:MinkowskiLattice} and \cite[Theorem~2.5]{ArdilaBenedettiDoker:MatroidVolumes} imply
\begin{corollary}\label{cor:ehr0-beta}
    Let $H_k=1+\ldots+\frac{1}{k}$ denote the harmonic numbers. For every matroid $M$,
    \begin{equation*}
        \ehr_M'(0)=\sum_{\substack{A\subset M\\ \abs{A}\leq n-2}} \HN_{n-\abs{A}-1} \widetilde{\beta}(M/A).
    \end{equation*}
\end{corollary}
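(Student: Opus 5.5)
Apply the linear coefficient $\ehr'_{(\cdot)}(0)$ to both sides of the signed Minkowski decomposition of \cite[Theorem~2.5]{ArdilaBenedettiDoker:MatroidVolumes},
\begin{equation*}
    \MP{M}=\sum_{A\subset M}\widetilde{\beta}(M/A)\,\Delta_{M\setminus A}.
\end{equation*}
The coefficients $\widetilde\beta(M/A)$ may be negative, and \cite[Corollary~23]{BoroczkyLudwig:MinkowskiLattice} only covers nonnegative integer combinations. So first move the terms with negative coefficient to the left-hand side. This gives an honest identity of Minkowski sums with nonnegative integer coefficients:
\begin{equation*}
    \MP{M}+\sum_{\widetilde\beta(M/A)<0}\abs{\widetilde\beta(M/A)}\,\Delta_{M\setminus A}=\sum_{\widetilde\beta(M/A)>0}\widetilde\beta(M/A)\,\Delta_{M\setminus A}.
\end{equation*}
Both sides are lattice polytopes. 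That the signed identity can be read this way (with cancellation law for Minkowski sums of convex bodies) is part of the content of the cited theorem for generalized permutohedra. Applying additivity to both sides and rearranging yields
\begin{equation*}
    \ehr'_M(0)=\sum_{A}\widetilde\beta(M/A)\,\ehr'_{\Delta_{M\setminus A}}(0).
\end{equation*}

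Next, evaluate the linear Ehrhart coefficient of a standard simplex $\Delta_I$ with $\abs{I}=m$. The $k$-th dilate has $\binom{k+m-1}{m-1}$ lattice points, so
\begin{equation*}
    \ehr_{\Delta_I}(t)=\frac{(t+1)(t+2)\cdots(t+m-1)}{(m-1)!}.
\end{equation*}
Its derivative at $t=0$ is $\sum_{j=1}^{m-1}1/j=\HN_{m-1}$, in agreement with \cite[Proposition~4.3]{JochemkoRavichandran:PermuEhrPos}. With $m=n-\abs{A}$, the summand for $A$ is therefore $\HN_{n-\abs{A}-1}\,\widetilde\beta(M/A)$.

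Finally, restrict the range of summation.
\begin{itemize}
    \item The term $A=M$ vanishes, since $\widetilde\beta(\emptyset)=0$. It also gives $\HN_{-1}$, which is meaningless, so it must be excluded anyway.
    \item Terms with $\abs{A}=n-1$ have coefficient $\HN_0=0$, so they vanish regardless. This matches the remark that such summands are mere translations by $\uv_i$ when $i$ is a coloop.
\end{itemize}
What remains is exactly the sum over $\abs{A}\leq n-2$ stated in the corollary.

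The only delicate step is the first: justifying additivity for a \emph{signed} Minkowski sum via the cancellation rearrangement. Everything else is a direct computation.
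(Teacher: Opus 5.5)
Your proposal is correct and follows the paper's argument: you apply Minkowski additivity of the linear Ehrhart coefficient to the Ardila--Benedetti--Doker decomposition, use $\ehr'_{\Delta_I}(0)=\HN_{\abs{I}-1}$, and drop the trivial terms. Two of your steps are slightly more explicit versions of the paper's: moving the negative terms to the other side (which is just the definition of a signed Minkowski sum), and disposing of the $\abs{A}=n-1$ terms via $\HN_0=0$ rather than via translation invariance.
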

Since $M/A$ has 2 or more elements, duality of the beta invariant applies so that $\beta(M/A)=\beta( (M/A)^\star)=\beta(M^\star|_{M\setminus A})$ for the dual matroid $M^\star$ of $M$. Since $\ehr_M=\ehr_{M^\star}$ is invariant under duality, we can thus rewrite Corollary~\ref{cor:ehr0-beta} as
\begin{equation*}
    \ehr_M'(0)=\sum_{\substack{A\subseteq M\\ \abs{A}\geq 2}} \HN_{\abs{A}-1} (-1)^{\ell(A)+1}\beta(M|_A).
\end{equation*}
Here $\ell(A)=\abs{A}-\rk(A)$ denotes the corank of $A$ in $M$ (which is equal to the rank of $M\setminus A$ in $M^\star$).
Inserting the definition of $\beta$, we get
\begin{equation*}
    \ehr_M'(0)=-\sum_{\substack{A\subseteq M\\ \abs{A}\geq 2}} \HN_{\abs{A}-1} \sum_{B\subseteq A} (-1)^{\abs{A\setminus B}}\rk(B).
\end{equation*}
The empty set $B=\emptyset$ does not contribute since $\rk(\emptyset)=0$. The entire matroid $B=M$ contributes $-r\HN_{n-1}$ (provided that $n\geq 2$) where $r\defas\rk(M)$. Exchanging the order of sums, for fixed $B\neq \emptyset,M$ of size $b\neq 0,n$ the sum over $A$ is
\begin{align*}
    \sum_{\substack{A\supseteq B \\ \abs{A}\geq 2}}  (-1)^{\abs{A\setminus B}} \HN_{\abs{A}-1}
    &= \sum_{i=0}^{n-b} \binom{n-b}{i} (-1)^i \left( \HN_{b-1} + \sum_{k=b}^{b+i-1} \frac{1}{k}\right) \\
    &= \HN_{b-1} (1-1)^{n-b} + \sum_{k=0}^{n-1-b}\frac{1}{b+k} \sum_{i=k+1}^{n-b} \binom{n-b}{i} (-1)^i \\
    &=\sum_{k=0}^{n-1-b}\frac{1}{b+k} \binom{n-1-b}{k} (-1)^{k+1} 
    = - \frac{1}{b\binom{n-1}{b}}\ .
\end{align*}
Here we set $i=\abs{A\setminus B}$ and used well-known binomial coefficient summation identities; the identity used in the last step is \cite[Corollary~2.2]{SuryWangZhao:ReciBin}. In total, this leads to the formula
\begin{equation*}
    \ehr_M'(0)=-r\HN_{n-1} + \sum_{b=1}^{n-1} \frac{1}{(n-1)\binom{n-2}{b-1}} \sum_{\substack{B\subset M\\\abs{B}=b}} \rk (B).
\end{equation*}
Although we assumed $n\geq2$ in the above derivation, this formula stays correct for the two matroids $\UM{1}{0}$, $\UM{1}{1}$ with $n=1$: In this case, $\ehr_M(t)=1$ constant ($\MP{M}$ is a point) and our formula gives $\ehr_M'(0)=0$ with our convention $\HN_0=0$.

To arrive at the form stated in Theorem~\ref{thm:Dehr0}, it remains only to note that
\begin{equation}\label{eq:simple-harmonic-sum}
    \sum_{b=1}^{n-1} \frac{1}{(n-1)\binom{n-2}{b-1}}  r\binom{n}{b}
    =r \sum_{b=1}^{n-1} \frac{n}{b(n-b)}
    =r \sum_{b=1}^{n-1} \left(\frac{1}{b} + \frac{1}{n-b}\right)
    =2r \HN_{n-1}.
\end{equation}
\begin{remark}
    Another natural polytope associated to a matroid is the independent set polytope $\IP{M}$; see \cite[\S4]{ArdilaBenedettiDoker:MatroidVolumes} for details. It is also a signed Minkowski sum of simplices, computed in \cite[Theorem~4.4]{ArdilaBenedettiDoker:MatroidVolumes}, and the above calculation is straightforwardly adapted to this case. For the linear Ehrhart coefficient of the independent set polytope we thus obtain
    \begin{equation}
        \ehr'_{\IP{M}}(0)
        =\sum_{A\subset M} \HN_{n-\abs{A}} \widetilde{\beta}(M/A)
        = r\HN_n - \sum_{\emptyset\neq A\subset M} \frac{r-\rk(A)}{n\binom{n-1}{\abs{A}-1}}.
    \end{equation}
    We conclude that the inequality from \cite[Corollary~4.12]{ArdilaBenedettiDoker:MatroidVolumes} is a strictly weaker constraint than \cite[Corollary~4.10]{ArdilaBenedettiDoker:MatroidVolumes}. More precisely, $\ehr'_{\IP{M}}(0)-\ehr'_M(0)\geq \frac{r}{n}$ is always non-negative:
    \begin{equation*}
        \ehr'_{\IP{M}}(0)-\ehr'_M(0)=\sum_{A\subsetneq M} \frac{\widetilde{\beta}(M/A)}{n-\abs{A}}
        = \frac{r}{n} + \sum_{\emptyset\neq A\subsetneq M} \frac{r-\rk(A)}{n\binom{n-1}{\abs{A}}}\,.
    \end{equation*}
\end{remark}

\section{Bounds on the linear Ehrhart coefficient}\label{sec:bounds}
To bound the linear Ehrhart coefficient, we consider the integers that appear in the numerator in the sum on the right-hand side of Theorem~\ref{thm:Dehr0}.
\begin{definition}\label{def:ak}
    For any matroid $M$ of size $n$ and rank $r$ and any integer $0\leq k\leq n$, set
\begin{equation}\label{eq:ak}
    a_k(M)\defas\sum_{\substack{A \subseteq M\\ \abs{A}=k}} \big(r-\rk(A)\big)=\sum_{i-j=r-k} i R_{ij}
    \quad\in\quad\ZZ_{\geq0} \ .
\end{equation}
\end{definition}
The values $a_0(M)=r$ (from $A=\emptyset$) and $a_n(M)=0$ (from $A=M$) do not depend on $M$ (when $r$ is fixed) and do not enter our formula for $\ehr'_M(0)$. Recall that the coefficients $R_{ij}(M)\in\ZZ_{\geq0}$ of $\Tutte_M(1+x,1+y)=\sum_{i,j} R_{ij} x^i y^j$ count the number
\begin{equation*}
    R_{ij}=\abs{\{A\subseteq E\colon \rk(A)=r-i\ \text{and}\ \abs{A}=r-i+j\}}
\end{equation*}
of subsets of rank $r-i$ and size $r-i+j$. See remark~\ref{rem:CameronFink} for another interpretation of $a_k$.

\begin{lemma}\label{lem:ak-lower}
    For any matroid of size $n$ and rank $r$, we have $a_k(M)\geq (r-k)\binom{n}{k}$ for all $k\leq r$. Furthermore, the equality $a_r(M)=0$ holds if and only if $M\cong \UM{n}{r}$.
\end{lemma}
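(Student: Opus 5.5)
The plan is to bound each summand of $a_k(M)$ from below individually. For any subset $A$ of size $k$, the rank satisfies $\rk(A)\leq \abs{A}=k$, so $r-\rk(A)\geq r-k$. Summing over all $\binom{n}{k}$ subsets of size $k$ then gives $a_k(M)\geq (r-k)\binom{n}{k}$ immediately, for every $0\leq k\leq n$. The restriction $k\leq r$ only ensures that the bound is meaningful (non-negative). I do not expect any obstacle in this part.

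For the characterization of $a_r(M)=0$, note first that $a_r(M)$ is a sum of non-negative integers $r-\rk(A)$, since $\rk(A)\leq \rk(M)=r$. Hence $a_r(M)=0$ holds if and only if every subset $A$ of size $r$ has $\rk(A)=r$, that is, every $r$-subset is a basis.

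The matroids in which every $r$-element subset is a basis are exactly the uniform matroids $\UM{n}{r}$, essentially by definition. For the converse direction, I check that in $\UM{n}{r}$ every subset $A$ has rank $\min(\abs{A},r)$, so all $r$-subsets have rank $r$ and $a_r=0$. For the forward direction, if every $r$-subset is a basis, then the bases are precisely all $r$-subsets, which identifies $M$ with $\UM{n}{r}$ up to isomorphism. The edge cases $r=0$ (only $A=\emptyset$, and $\UM{n}{0}$) and $r=n$ (only $A=M$) are consistent with this. The only point needing care is to phrase the equivalence via bases, which is straightforward.
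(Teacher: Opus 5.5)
Your proposal is correct and is essentially the paper's proof. The paper phrases the bound through the Tutte coefficients $R_{ij}$, noting that each $k$-subset has $i = r-k+j \geq r-k$; this is the same as your termwise bound $r-\rk(A)\geq r-k$. For the equality case, it likewise deduces from $a_r(M)=0$ that every $r$-subset has rank $r$, so $M\cong\UM{n}{r}$.
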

\begin{proof}
    The sum $\sum_{i-j=r-k} R_{ij}=\binom{n}{k}$ counts all subsets of size $\abs{A}=k$. Among sequences $R_{ij}\geq 0$ with this constraint, expression \eqref{eq:ak} is minimized when the entire sum $\binom{n}{k}$ is concentrated in the entry $R_{ij}$ with the minimal possible value for $i=r-k+j \geq 0$. This configuration is
    \begin{equation} \label{eq:Urn-Rij}
        R_{ij} = 0\quad\text{for $i,j\geq 1$;}\qquad
        R_{i0} = \binom{n}{r-i},\qquad\text{and}\qquad
        R_{0j} = \binom{n}{r+j}.        
    \end{equation}
    It yields the values $a_k=(r-k)\binom{n}{k}$ for $k\leq r$ and $a_k=0$ for $k\geq r$, proving the lower bound. If $a_r(M)=\sum_{i} i R_{ii}=0$, we must have $R_{ii}=0$ for all $i>0$ and thus all $R_{00}=\binom{n}{r}$ subsets of size $r$ have rank $r$. Therefore, any subset $A\subseteq M$ of size $\abs{A}\leq r$ is independent. This completely determines the rank function of $M$ and it follows that $M\cong\UM{n}{r}$ is the uniform matroid. Note that $\UM{n}{r}$ precisely realises the configuration \eqref{eq:Urn-Rij}.
\end{proof}
\begin{lemma}\label{lem:ak-upper}
    For any matroid of size $n$ and rank $r$, we have $a_k(M)\leq r\binom{n-1}{k}$ for all $k$. Furthermore, an equality $a_k(M)=r\binom{n-1}{r}$ at any $0<k<n$ implies that $M\cong \UM{r}{r}\oplus\UM{\ell}{0}$.
\end{lemma}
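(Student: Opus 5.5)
The plan is a double-counting argument against a fixed basis. Fix any basis $B$ of $M$, so $\abs{B}=r$. For every subset $A\subseteq M$, the set $A\cup B$ spans $M$, and $\rk(A\cup B)\leq \rk(A)+\abs{B\setminus A}$. Hence
\begin{equation*}
    r-\rk(A)\leq \abs{B\setminus A}=\#\set{e\in B\colon e\notin A}.
\end{equation*}
Summing this over all $A$ with $\abs{A}=k$ and exchanging the order of summation gives
\begin{equation*}
    a_k(M)\leq \sum_{e\in B}\#\set{A\colon \abs{A}=k,\ e\notin A}=r\binom{n-1}{k}.
\end{equation*}
This proves the inequality for every $k$.

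For the equality case, I read the hypothesis as $a_k(M)=r\binom{n-1}{k}$ for some $0<k<n$; the subscript $r$ in the statement looks like a typo. Since the bound above holds for every basis $B$, equality forces $r-\rk(A)=\abs{B\setminus A}$ for \emph{every} basis $B$ and \emph{every} $k$-subset $A$. Equivalently, $\rk(A)=\abs{A\cap B}$ for all such $A$ and $B$. In particular, $\abs{A\cap B}$ does not depend on the choice of basis $B$.

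Now suppose some element $e$ is neither a loop nor a coloop. Since $e$ is not a loop, there is a basis $B$ containing $e$. Since $e$ is not a coloop, $B$ does not span $M$ after removing $e$, so basis exchange yields $f\notin B$ such that $B'=B-e+f$ is a basis. Because $1\leq k\leq n-1$ and $e\neq f$, we can choose a $k$-subset $A$ with $e\in A$ and $f\notin A$. Then $\abs{A\cap B'}=\abs{A\cap B}-1$, which contradicts the independence of $\abs{A\cap B}$ from the basis. Therefore every element is a loop or a coloop, and $M\cong\UM{r}{r}\oplus\UM{\ell}{0}$.

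I expect the only delicate step to be this equality analysis, specifically making sure the exchange step produces the required pair $e\in A$, $f\notin A$. That is exactly where the hypothesis $0<k<n$ enters.
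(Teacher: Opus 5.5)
Your proof is correct and is essentially the paper's argument: your inequality $r-\rk(A)\le\abs{B\setminus A}$ is the paper's $\rk(A)\ge\abs{A\cap B}$, your double count over $e\in B$ replaces its Chu--Vandermonde step, and the equality case uses the same exchanged basis $(B\setminus\{e\})\cup\{f\}$ together with a $k$-set $A\ni e$, $f\notin A$. You are also right that $\binom{n-1}{r}$ in the statement should read $\binom{n-1}{k}$; the one wording fix is that $f$ exists because $e$ is not a coloop (some basis avoids $e$, so the exchange axiom applies), not because $B\setminus\{e\}$ fails to span $M$, which is always true.
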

\begin{proof}
    Pick any basis $B$ of $M$. Then every subset $A\subseteq M$ has rank $\rk(A)\geq \rk(A\cap B)=\abs{A\cap B}$. Summing over $s=\abs{A\cap B}$ and setting $\ell=n-r=\abs{M\setminus B}$, we get the estimate
    \begin{equation*}
        a_k(M)=\sum_{s+t=k} \sum_{\substack{A'\subseteq B \\ \abs{A'}=s}} \sum_{\substack{A''\subseteq M\setminus B \\ \abs{A''}=t}} \Big(r-\rk(A'\sqcup A'')\Big)
        \leq \sum_{s+t=k} \binom{r}{s} \binom{\ell}{t} (r-s)
        =r\binom{n-1}{k}
    \end{equation*}
    by $(r-s)\binom{r}{s}=r\binom{r-1}{s-1}$ and the Chu-Vandermonde identity.
    
    An equality $a_k(M)=r\binom{n-1}{k}$ requires that $\rk(A)=\abs{A\cap B}$ holds for all bases $B$ and for all $k$-subsets $A$ of $M$. Suppose that $M$ has another basis $B'\neq B$, then via the basis exchange axiom we can find one of the form $B'=(B\setminus \{i\})\sqcup \{j\}$ for some distinct elements $i\in B$ and $j\in M\setminus B$. When $0<k<n$, we can choose a subset $A\subset M$ of size $k$ with $i\in A$ and $j\notin A$, so that $\abs{A\cap B}=1+\abs{A\cap B'}$. This contradicts $\abs{A\cap B}=\rk(A)=\abs{A\cap B'}$, so $B$ must be the only basis of $M$ and therefore $M\cong\UM{r}{r}\oplus\UM{\ell}{0}$.
\end{proof}
Note that at any fixed size $n$ and rank $r=n-\ell$, the following properties are equivalent:
\begin{itemize}
    \item $M\cong \UM{r}{r}\oplus \UM{\ell}{0}$ (i.e.\ every element of $M$ is a loop or a coloop),
    \item $\MP{M}$ is a point (i.e.\ $M$ has exactly one basis).
\end{itemize}
In this case, $\ehr_M(t)=1$ is the constant function and thus $\ehr'_M(t)=0$. From the proof of lemma~\ref{lem:ak-upper} we see that this matroid satisfies all equalities $a_k(M)=r\binom{n-1}{k}$, and one checks easily that the sum in Theorem~\ref{thm:Dehr0} thus indeed reproduces $\ehr'_M(t)=0$ in this case.

We now have all ingredients to prove Corollary~\ref{cor:0-uniform}.
\begin{proof}[{Proof of Corollary~\ref{cor:0-uniform}}]
    When $r=0$ or $r=n$, then there is only one matroid ($M\cong\UM{n}{0}$ or $M\cong\UM{n}{n}$) and the claim is trivial. When $n>r>0$, Definition~\ref{def:ak} and Theorem~\ref{thm:Dehr0} give
    \begin{equation}\label{eq:ehr0-ak}
        \ehr_M'(0)=r \HN_{n-1} - \sum_{k=1}^{n-1} \frac{a_k(M)}{(n-1)\binom{n-2}{k-1}}\,.
    \end{equation}
    So $\ehr'_{\UM{r}{r}\oplus\UM{\ell}{0}}(0)\leq\ehr'_M(0)\leq\ehr'_{\UM{n}{r}}(0)$ follows via $a_k(\UM{n}{r})\leq a_k(M)\leq a_k(\UM{r}{r}\oplus\UM{\ell}{0})$ from lemmas~\ref{lem:ak-lower} and \ref{lem:ak-upper}. To saturate the upper (lower) bound for $\ehr_M'(0)$, we must saturate the lower (upper) bounds for $a_k(M)$ simultaneously for all $1\leq k\leq n-1$, since they all contribute with strictly negative coefficient to $\ehr_M'(0)$. This includes in particular $a_r(M)$, and therefore lemma~\ref{lem:ak-lower} (and also lemma~\ref{lem:ak-upper}) characterize such extremal $M$.
\end{proof}
\begin{remark}\label{rem:CameronFink}
    In \cite[Theorem~3.2]{CameronFink:TutteLattice} the Tutte polynomial is related to a polynomial $Q'_M(x,y)=\sum_{i,j} (-1)^{n-1-i-j} b_{ij}x^i y^j\in\ZZ[x,y]$ of degree $n-1$ whose coefficients $b_{ij}\geq 0$ count the cells in a mixed subdivision of a polytope
    \cite[Theorm~1.4]{CameronFink:TutteLattice}. Chasing through these relations, this gives a geometric interpretation of $a_k$, since we find
    \begin{equation*}
        a_k(M)= (r-k) b_{n-1-k,k}+b_{n-1-k,k-1}
        =(r-k) \binom{n-1}{k}+b_{n-1-k,k-1}.
    \end{equation*}
\end{remark}

In order to approach Proposition~\ref{prop:MM}, we will strengthen the inequalities from lemma~\ref{lem:ak-upper},
\begin{equation*}
    a_k(M)\leq a_k(\UM{r}{r}\oplus\UM{\ell}{0}) = r\binom{n-1}{k}.
\end{equation*}
These are optimal if we allow arbitrary matroids $M$ of size $n$ and rank $r$, but they can be improved in the case when $M$ is constrained. We will consider the classes of loopless matroids, coloopless matroids, and connected matroids.
\begin{lemma}\label{lem:contract-delete}
    If $e\in M$ is neither a loop nor a coloop of $M$, then for all $k>0$ we have that
    \begin{equation*}
        a_k(M)=a_k(M\setminus e)+a_{k-1}(M/e)\,.
    \end{equation*}
\end{lemma}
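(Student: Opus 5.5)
The plan is to split the sum defining $a_k(M)$ according to whether $e\in A$, and to identify each part with the corresponding quantity for the deletion or the contraction. Let $r=\rk(M)$. Since $e$ is not a coloop, $M\setminus e$ has ground set $M\setminus\{e\}$, rank $\rk(M\setminus e)=r$, and rank function $\rk_{M\setminus e}(A)=\rk(A)$ for $A\subseteq M\setminus\{e\}$. Since $e$ is not a loop, $M/e$ has rank $r-1$ and rank function $\rk_{M/e}(A')=\rk(A'\cup\{e\})-1$ for $A'\subseteq M\setminus\{e\}$.

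\emph{Subsets avoiding $e$.} The $k$-subsets $A\subseteq M$ with $e\notin A$ are exactly the $k$-subsets of $M\setminus e$. Their contribution to $a_k(M)$ is
\begin{equation*}
    \sum_{\substack{A\subseteq M\setminus\{e\}\\ \abs{A}=k}} \big(r-\rk(A)\big)=a_k(M\setminus e).
\end{equation*}
This uses that the rank of $M\setminus e$ is still $r$, which is where the hypothesis that $e$ is not a coloop enters.

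\emph{Subsets containing $e$.} Write such a subset as $A=A'\sqcup\{e\}$ with $\abs{A'}=k-1\geq 0$; here we need $k>0$. Then
\begin{equation*}
    r-\rk(A)=(r-1)-\big(\rk(A'\cup\{e\})-1\big)=\rk(M/e)-\rk_{M/e}(A').
\end{equation*}
Summing over all $(k-1)$-subsets $A'$ of $M\setminus\{e\}$ gives $a_{k-1}(M/e)$. This uses that $e$ is not a loop, so that $\rk(M/e)=r-1$ and the formula for $\rk_{M/e}$ holds.

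Adding the two contributions yields $a_k(M)=a_k(M\setminus e)+a_{k-1}(M/e)$. One edge case remains: for $k=n$, the sum defining $a_n(M\setminus e)$ has no terms, so this quantity is $0$. With $a_n(M\setminus e)=0$ the identity still holds, since the right-hand side becomes $0+a_{n-1}(M/e)=0$.

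There is no real obstacle here. The only point needing care is to use both hypotheses correctly: not a coloop ensures the deletion keeps rank $r$, and not a loop ensures the contraction has rank $r-1$ with the shifted rank function.
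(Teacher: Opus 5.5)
Your proof is correct, and it takes a more elementary route than the paper. The paper does not split the subset sum directly. It quotes the Tutte deletion--contraction recursion in the form $R_{ij}(M)=R_{ij}(M\setminus e)+R_{ij}(M/e)$ and substitutes it into $a_k(M)=\sum_{i-j=r-k} iR_{ij}(M)$. Since $\rk(M\setminus e)=r$ and $\rk(M/e)=r-1$, the constraint $i-j=r-k$ is exactly the one defining $a_k(M\setminus e)$ and $a_{k-1}(M/e)$.

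Your splitting according to whether $e\in A$ is the standard proof of that Tutte recursion, specialised to the weighted count $a_k$. So the two arguments have the same content at different levels. The paper's version is a one-liner that keeps $a_k$ in view as a Tutte invariant, in line with the paper's theme. Yours is self-contained and handles the boundary $k=n$ explicitly; there $a_n(M\setminus e)$ lies outside the range of Definition~\ref{def:ak}.

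Your route also shows which hypothesis is actually needed. For every $e$ one has $\rk(M/e)-\rk_{M/e}(A')=\big(r-\rk(\{e\})\big)-\big(\rk(A'\cup\{e\})-\rk(\{e\})\big)=r-\rk(A'\cup\{e\})$. Hence your second step goes through even when $e$ is a loop, and only the no-coloop assumption is used. The $R_{ij}$ recursion quoted in the paper, by contrast, genuinely requires $e$ not to be a loop.
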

\begin{proof}
    The contraction-deletion recursion of the Tutte polynomial gives $R_{ij}(M)=R_{ij}(M\setminus e)+R_{ij}(M/e)$. Inserting this into \eqref{eq:ak} yields the claimed identity, because $\rk(M/e)=\rk(M)-1$ and $\rk(M\setminus e)=\rk(M)$.
\end{proof}
\begin{lemma}\label{lem:upper-noloops}
    Let $M$ be a loopless matroid of size $n$ and rank $r\geq1$. Then for all $k\geq0$,
    \begin{equation*}
        a_k(M)\leq a_k(\UM{r-1}{r-1}\oplus \UM{\ell+1}{1})
        %= (r-1)\binom{n-1}{k} + \binom{r-1}{k}
        = r\binom{n-1}{k} -\binom{n-1}{k} + \binom{r-1}{k}
        \,.
    \end{equation*}
    Furthermore, equality at any $2\leq k\leq n-1$ implies that $M\cong\UM{r-1}{r-1}\oplus\UM{\ell+1}{1}$.
\end{lemma}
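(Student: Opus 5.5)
We prove both parts by induction on $n$, using the deletion–contraction relation of Lemma~\ref{lem:contract-delete} for the inequality and the equality clause of Lemma~\ref{lem:ak-upper} for the rigidity statement. Write $N_{n,r}\defas\UM{r-1}{r-1}\oplus\UM{\ell+1}{1}$ and $b_k(n,r)\defas (r-1)\binom{n-1}{k}+\binom{r-1}{k}$ for the claimed bound. First we check that $a_k(N_{n,r})=b_k(n,r)$ directly. A subset $A$ meeting the coloop part $C$ (of size $r-1$) in $s$ elements has rank $s+[A\not\subseteq C]$. Summing gives $r\binom{n}{k}-\sum_s s\binom{r-1}{s}\binom{\ell+1}{k-s}-\big(\binom{n}{k}-\binom{r-1}{k}\big)$, and this simplifies to $b_k(n,r)$ by Chu–Vandermonde.

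\emph{Base and coloop case.} Suppose $M$ has a coloop $c$, so $M=M'\oplus c$ with $M'$ loopless of size $n-1$ and rank $r-1$. Sorting $k$-subsets by whether they contain $c$ gives
\[a_k(M)=a_k(M')+\tbinom{n-1}{k}+a_{k-1}(M').\]
If $r=1$, then $M'$ is empty and $M=\UM{1}{1}=N_{1,1}$, which is trivial. Otherwise we apply induction to $M'$ in rank $r-1$. The two bounds $b_k(n-1,r-1)+b_{k-1}(n-1,r-1)$ add, via Pascal's rule, to $(r-2)\binom{n-1}{k}+\binom{r-1}{k}$. Adding $\binom{n-1}{k}$ yields exactly $b_k(n,r)$.

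\emph{Main case.} Otherwise pick an element $e$, which is neither a loop nor a coloop. Lemma~\ref{lem:contract-delete} gives $a_k(M)=a_k(M\setminus e)+a_{k-1}(M/e)$. Here $M\setminus e$ is loopless of size $n-1$ and rank $r$, so induction gives $a_k(M\setminus e)\le (r-1)\binom{n-2}{k}+\binom{r-1}{k}$. The contraction $M/e$ may have loops, so we only use Lemma~\ref{lem:ak-upper}, which gives $a_{k-1}(M/e)\le (r-1)\binom{n-2}{k-1}$. Adding these with Pascal's rule gives exactly $b_k(n,r)$, which proves the inequality. (The case $k=0$ is checked directly: $a_0=r=b_0$.)

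\emph{Equality.} This is the delicate part. Suppose $2\le k\le n-1$ and $a_k(M)=b_k(n,r)$.

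In the main case, equality forces $a_{k-1}(M/e)=(r-1)\binom{n-2}{k-1}$ with $1\le k-1\le n-2$. By Lemma~\ref{lem:ak-upper}, $M/e$ then has a unique basis. Since $e$ is not a coloop and $e$ was an arbitrary non-coloop, this holds for every non-loop, non-coloop $e$. From this we want to derive that all non-coloops of $M$ are parallel to each other, hence $M\cong N_{n,r}$. The argument: if $f$ is a non-coloop of $M$ with $f$ not parallel to $e$, then $f$ is not a loop in $M/e$. Because $M/e$ has a unique basis, $f$ is a coloop of $M/e$. We would use basis exchange to show that $f$ cannot be a coloop of $M/e$ while also being a non-coloop of $M$ with $e$ a non-coloop, e.g. by exhibiting two distinct bases of $M/e$ obtained from a basis of $M$ containing $e$ but not $f$. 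This matroid-theoretic rigidity step is the main obstacle, and we would try it first. An alternative is to use the induction hypothesis on $M\setminus e$, valid when $k\le n-2$, to get $M\setminus e\cong N_{n-1,r}$, and then to place $e$ using the extra fact that $M/e$ has a single basis. This pins $e$ into the parallel class.

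In the coloop case, equality forces equality for $M'$ at $k$ or at $k-1$, with $k-1\ge 2$ where needed. The degenerate boundary values $k=2$ (giving index $1$) and $k=n-1$ must be checked separately by hand. Induction then gives $M'\cong N_{n-1,r-1}$, hence $M\cong N_{n,r}$.
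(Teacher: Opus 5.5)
Your inequality argument is correct, and its main case is the paper's: deletion--contraction at a non-coloop $e$, the induction hypothesis for the loopless $M\setminus e$, and Lemma~\ref{lem:ak-upper} for $M/e$. Inducting on $n$ is the right parameter, since $M\setminus e$ keeps rank $r$.

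The equality part, however, stops at what you call the main obstacle. That step is just the standard fact that contraction cannot create coloops. If $f\neq e$ is not a coloop of $M$, take a basis $B$ with $f\notin B$. If $e\notin B$, swap $e$ in for some $x\neq e$ on its fundamental circuit; such $x$ exists because $e$ is not a loop, and $x\in B$ forces $x\neq f$. Then $B\setminus\{e\}$ is a basis of $M/e$ avoiding $f$. (Equivalently, $(M/e)^\star=M^\star\setminus e$, and deletion creates no loops.) With this, your main case closes at once. $M$ is coloop-free, so $M/e$ is coloop-free with a unique basis, hence consists of loops. So $r=1$ and $M\cong\UM{n}{1}$. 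You need neither to vary $e$ nor the alternative via $M\setminus e$.

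Your coloop branch also leaves its boundary check undone, but it is harmless. Equality forces \emph{both} $a_k(M')$ and $a_{k-1}(M')$ to be extremal, not one ``or'' the other. So for $n\geq 4$ one of $k,k-1$ lies in $[2,n-2]$ and induction applies. For $n=3$, $k=2$, the loopless two-element $M'$ is unique in its rank and equals $N_{2,r-1}$.

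The paper avoids this branch altogether. Whenever $\ell>0$ it takes $e$ to be any non-coloop, even if $M$ has coloops. The $r-1$ coloops of $M/e\cong\UM{r-1}{r-1}\oplus\UM{\ell}{0}$ then lift to coloops of $M$, so $M\cong\UM{r-1}{r-1}\oplus N$ with $N$ loopless of rank one, i.e.\ $N\cong\UM{\ell+1}{1}$. That route uses only the rigidity in Lemma~\ref{lem:ak-upper} and never the inductive equality clause.
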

\begin{proof}
    At $r=1$ the only loopless matroid is the bond, so $M\cong \UM{n}{1}\cong \UM{0}{0}\oplus \UM{\ell+1}{1}$. In this case, $\rk(A)=r=1$ for every non-empty subset $A\subseteq M$, hence $a_k(M)=0$ for all $k>0$ and $a_0(M)=1$. Thus $a_k(M)=\binom{0}{k}=(r-1)\binom{n-1}{k}+\binom{r-1}{k}$ for all $k$.
    
    At $r=n$, again there is only a single matroid $M\cong\UM{n}{n}\cong\UM{r-1}{r-1}\oplus\UM{1}{\ell+1}$ and the claim reduces to lemma~\ref{lem:ak-upper}, because then $(r-1)\binom{n-1}{k}+\binom{r-1}{k}=r\binom{n-1}{k}$.

    For $2\leq r\leq n-1$ there are several isomorphism classes of matroids and we proceed by induction on $r$. Since $\ell>0$ in this case, $M$ must have an element $e$ which is not a coloop. We can thus apply lemma~\ref{lem:contract-delete} to $e$. Since deletion cannot create loops, the induction hypothesis applies to $a_k(M\setminus e)$ and together with lemma~\ref{lem:ak-upper} we obtain
    \begin{align*}
        a_k(M) &\leq a_k(\UM{r-1}{r-1}\oplus\UM{\ell}{1}) + a_{k-1}(\UM{r-1}{r-1}\oplus\UM{\ell}{0})
        \\ &= (r-1)\binom{n-2}{k} + \binom{r-1}{k} + (r-1)\binom{n-2}{k-1}
        \\ &= (r-1)\binom{n-1}{k} + \binom{r-1}{k}.
    \end{align*}
    This proves the claimed bound. To get equality at some $2\leq k\leq n-1$ we must in particular have $a_{k-1}(M/e)=a_{k-1}(\UM{r-1}{r-1}\oplus\UM{\ell}{0})$, so lemma~\ref{lem:contract-delete} shows that $M/e\cong \UM{r-1}{r-1}\oplus\UM{\ell}{0}$. Contraction cannot create coloops, so the $r-1$ coloops of $M/e$ must already be present in $M$, that is, $M\cong\UM{r-1}{r-1}\oplus N$ for some matroid $N$ containing $e$. Since $M$ and thus $N$ have no loops, and $N/e\cong\UM{\ell}{0}$ has only loops, $N\cong\UM{\ell+1}{1}$ must be the bond. This proves $M\cong \UM{r-1}{r-1}\oplus\UM{\ell+1}{1}$.
\end{proof}
Note that $a_1(M)=\sum_{e\in M} (r-1)=n(r-1)$ for \emph{every} loopless matroid, so we cannot conclude anything about $M$ from saturating the bound of lemma~\ref{lem:upper-noloops} in the case $k=1$.

The Tutte polynomial of the dual matroid satisfies $\Tutte_{M^\star}(x,y)=\Tutte_M(y,x)$. In other words, $R_{ij}(M)=R_{ji}(M^\star)$. Inserting this relation into \eqref{eq:ak}, we see that
\begin{equation*}
    a_{n-k}(M^\star)=a_{k}(M)+(k-r)\binom{n}{k}
\end{equation*}
for all $k$, where $r=\rk(M)$ and $n=\abs{M}$. We can thus express lemma~\ref{lem:upper-noloops} in terms of the dual matroid, and we obtain the following equivalent statement.
\begin{lemma}\label{lem:upper-nocoloops}
    Let $M$ be a coloopless matroid of size $n$ and rank $r<n$. Then for all $k\geq0$,
    \begin{equation*}
        a_k(M)\leq a_k(\UM{r+1}{r}\oplus \UM{\ell-1}{0})
        = r\binom{n-1}{k} -\binom{n-1}{n-k}+ \binom{\ell-1}{n-k}
        \,.
    \end{equation*}
    Furthermore, equality at any $1\leq k\leq n-2$ implies that $M\cong\UM{r+1}{r}\oplus\UM{\ell-1}{0}$.
\end{lemma}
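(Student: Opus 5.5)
The plan is to deduce this lemma from Lemma~\ref{lem:upper-noloops} applied to the dual matroid $M^\star$, using the duality relation $a_{n-k}(M^\star)=a_k(M)+(k-r)\binom{n}{k}$ derived just above. If $M$ is coloopless of size $n$ and rank $r<n$, then $M^\star$ is loopless of size $n$ and rank $\ell=n-r\geq 1$. This is exactly the hypothesis of Lemma~\ref{lem:upper-noloops}, with $r$ and $\ell$ exchanged.

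First, Lemma~\ref{lem:upper-noloops} applied to $M^\star$ at index $n-k$ gives
\begin{equation*}
a_{n-k}(M^\star)\leq a_{n-k}(\UM{\ell-1}{\ell-1}\oplus\UM{r+1}{1}) = \ell\binom{n-1}{n-k}-\binom{n-1}{n-k}+\binom{\ell-1}{n-k}.
\end{equation*}
Next, the duality relation converts both sides back. The left side becomes $a_k(M)+(k-r)\binom{n}{k}$. On the right, since $(\UM{\ell-1}{\ell-1}\oplus\UM{r+1}{1})^\star\cong\UM{\ell-1}{0}\oplus\UM{r+1}{r}$, the same relation identifies the bound with $a_k(\UM{r+1}{r}\oplus\UM{\ell-1}{0})+(k-r)\binom{n}{k}$. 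Cancelling $(k-r)\binom{n}{k}$ yields the inequality in the first form stated.

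For the explicit closed form, one has to check that $\ell\binom{n-1}{n-k}-(k-r)\binom{n}{k}=r\binom{n-1}{k}$. Write $\binom{n}{k}=\binom{n-1}{k}+\binom{n-1}{k-1}$ and $\binom{n-1}{n-k}=\binom{n-1}{k-1}$. The left side then equals $(\ell-k+r)\binom{n-1}{k-1}-(k-r)\binom{n-1}{k}$, that is, $(n-k)\binom{n-1}{k-1}-(k-r)\binom{n-1}{k}$. Using $(n-k)\binom{n-1}{k-1}=k\binom{n-1}{k}$, this becomes $r\binom{n-1}{k}$, as required. The remaining terms $-\binom{n-1}{n-k}+\binom{\ell-1}{n-k}$ carry over unchanged.

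For the equality case, note that equality at $k$ for $M$ is equivalent to equality at $n-k$ for $M^\star$. The range $1\leq k\leq n-2$ corresponds exactly to $2\leq n-k\leq n-1$. Lemma~\ref{lem:upper-noloops} then gives $M^\star\cong\UM{\ell-1}{\ell-1}\oplus\UM{r+1}{1}$, and dualizing gives $M\cong\UM{r+1}{r}\oplus\UM{\ell-1}{0}$.

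There is no real obstacle here. The only points needing care are the binomial bookkeeping in the closed form and the boundary values of $k$: for $k$ outside $[0,n]$ both sides are trivially consistent by the convention on binomials. One also has to handle the edge case $r=0$, where $M\cong\UM{n}{0}$ and the stated formula should be checked directly.
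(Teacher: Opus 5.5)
Your proposal is correct and follows the paper's route exactly: the paper obtains this lemma by dualizing Lemma~\ref{lem:upper-noloops} through $a_{n-k}(M^\star)=a_k(M)+(k-r)\binom{n}{k}$. Your binomial check and the index correspondence $1\leq k\leq n-2 \leftrightarrow 2\leq n-k\leq n-1$ only make explicit what the paper leaves to the reader, and the case $r=0$ needs no separate check, since then $M^\star\cong\UM{n}{n}$ is already covered by the rank-equals-size case of Lemma~\ref{lem:upper-noloops}.
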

\begin{corollary}\label{cor:ehr0-coloopless}
    All matroids $M$ with size $n$, rank $r=n-\ell\geq 1$, and no loops satisfy $\ehr_M'(0)\geq \HN_{\ell}$. Equality holds if and only if $M\cong \UM{r-1}{r-1}\oplus \UM{1}{\ell+1}$.
    
    All matroids $M$ with size $n$, rank $r<n$, and no coloops satisfy $\ehr_M'(0)\geq \HN_r$. Equality holds if and only if $M\cong \UM{r+1}{r}\oplus\UM{\ell-1}{0}$.
\end{corollary}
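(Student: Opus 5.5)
The plan is to reduce both statements to a comparison of the integers $a_k(M)$ through formula \eqref{eq:ehr0-ak}, in the same way as in the proof of Corollary~\ref{cor:0-uniform}, with Lemma~\ref{lem:upper-noloops} now playing the role of Lemma~\ref{lem:ak-upper}. I read the extremal matroid in the first statement as $N\defas\UM{r-1}{r-1}\oplus\UM{\ell+1}{1}$, the one appearing in Lemma~\ref{lem:upper-noloops}. The case $r=n$ is trivial, since then $M\cong\UM{n}{n}\cong N$ is the only matroid and $\HN_0=0$, so I assume $1\leq r<n$.

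\textbf{Loopless case.} In \eqref{eq:ehr0-ak} every $a_k(M)$ with $1\leq k\leq n-1$ enters with a strictly negative coefficient $-1/\bigl((n-1)\binom{n-2}{k-1}\bigr)$. Lemma~\ref{lem:upper-noloops} gives $a_k(M)\leq a_k(N)$ for every loopless $M$ of size $n$ and rank $r$. It follows immediately that $\ehr'_M(0)\geq\ehr'_N(0)$.

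It remains to evaluate $\ehr'_N(0)$. The polytope $\MP{N}$ is the product of a point (coming from the coloops) with $\MP{\UM{\ell+1}{1}}$, which is a standard $\ell$-simplex. Hence $\ehr_N=\ehr_{\UM{\ell+1}{1}}$. Example~\ref{ex:ehr0-UM} then gives
\begin{equation*}
    \ehr'_N(0)=(\ell+1)\HN_{\ell+1}-\HN_1-\ell\HN_\ell=\HN_\ell .
\end{equation*}
As a cross-check, one can instead insert the closed form $a_k(N)=(r-1)\binom{n-1}{k}+\binom{r-1}{k}$ into \eqref{eq:ehr0-ak} and use \eqref{eq:simple-harmonic-sum}-type manipulations.

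For the equality case, equality $\ehr'_M(0)=\HN_\ell$ forces $a_k(M)=a_k(N)$ for all $1\leq k\leq n-1$. If $n\geq3$, this includes $k=2\leq n-1$, and the rigidity part of Lemma~\ref{lem:upper-noloops} yields $M\cong N$. If $n=2$, then $r=1$, and the only loopless matroid of that size and rank is $\UM{2}{1}=N$. The converse direction is the evaluation above.

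\textbf{Coloopless case.} This follows by duality. If $M$ has no coloops and $r<n$, then $M^\star$ is loopless of rank $\ell\geq1$ and size $n$. Since $\ehr_M=\ehr_{M^\star}$, the first part applied to $M^\star$ (whose corank is $r$) gives $\ehr'_M(0)\geq\HN_r$. Equality holds if and only if $M^\star\cong\UM{\ell-1}{\ell-1}\oplus\UM{r+1}{1}$, i.e.\ if and only if $M\cong\UM{\ell-1}{0}\oplus\UM{r+1}{r}$. Alternatively, one can argue directly with Lemma~\ref{lem:upper-nocoloops}; its rigidity range $1\leq k\leq n-2$ is nonempty for $n\geq3$, and the case $n=2$ is again trivial.

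\textbf{Main obstacle.} The only delicate point is bookkeeping: checking the small cases where the rigidity range of $k$ in Lemma~\ref{lem:upper-noloops} is empty, and getting the value $\ehr'_N(0)=\HN_\ell$ right. The latter relies on the Ehrhart polynomial being unchanged by taking a product with a point.
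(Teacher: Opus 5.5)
Your proof is correct and follows the paper's own argument: the bound from \eqref{eq:ehr0-ak} and Lemma~\ref{lem:upper-noloops}, the value $\HN_\ell$ obtained because the coloops only contribute a point factor together with Example~\ref{ex:ehr0-UM}, rigidity from $a_2$ when $n\geq 3$, and the small cases checked by hand (you also rightly read the minimizer as $\UM{r-1}{r-1}\oplus\UM{\ell+1}{1}$). The only cosmetic difference is that you get the coloopless half by dualizing the finished loopless statement via $\ehr_M=\ehr_{M^\star}$, whereas the paper says it ``goes analogously'' through Lemma~\ref{lem:upper-nocoloops}; the two routes are equivalent, since that lemma is itself the dual of Lemma~\ref{lem:upper-noloops}.
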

\begin{proof}
    Combine \eqref{eq:ehr0-ak} with lemma~\ref{lem:upper-noloops} for the bound $\ehr_M'(0)\geq\ehr_{\UM{r-1}{r-1}\oplus\UM{\ell+1}{1}}'(0)$. The direct sum with $\UM{r-1}{r-1}$ only translates the matroid polytope, hence the Ehrhart coefficient is $\ehr'_{\UM{\ell+1}{1}}(0)=\HN_{\ell}$ as a special case of example~\ref{ex:ehr0-UM}. For $n\leq 2$, uniqueness of the minimizer is trivial since there is only one isomorphism class of loopless matroids in each rank at such $n$. For the uniqueness at sizes $n\geq 3$, we can invoke the equality clause of lemma~\ref{lem:upper-noloops} for $a_2(M)$, because $k=2<n$. The coloopless case goes analogously.
\end{proof}

We have thus determined the unique minimizers of $\ehr_M'(0)$ at each size and rank, within the classes of all matroids, loopless matroids, and coloopless matroids. We now turn to the class of connected matroids to prove Proposition~\ref{prop:MM}.
\begin{lemma}\label{lem:upper-conn}
    Let $M$ be a connected matroid of size $n\geq2$ and rank $r$. Then for all $k\geq0$,
    \begin{equation*}
        a_k(M)\leq a_k(\MM{n}{r})=r\binom{n-1}{k}-\binom{n}{k}+\binom{r}{k}+\binom{n-r}{n-k}-\delta_{k,r}
    \end{equation*}
    Here $\delta_{k,r}$ denotes Kronecker's delta: $\delta_{k,r}=1$ when $k=r$ and $\delta_{k,r}=0$ otherwise.
    
    Furthermore, equality $a_k(M)=a_k(\MM{n}{r})$ at any $2\leq k\leq n-2$ implies $M\cong\MM{n}{r}$.
\end{lemma}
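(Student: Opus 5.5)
We argue by induction on $n$, in the style of lemma~\ref{lem:upper-noloops}, using lemma~\ref{lem:contract-delete}. The base cases are those where $M$ has a single isomorphism class. For $n=2$ this is $\UM{2}{1}$. More generally, if $r=1$ then the connected matroid is $\UM{n}{1}=\MM{n}{1}$, and if $r=n-1$ it is $\UM{n}{n-1}=\MM{n}{n-1}$. In these cases we verify the closed formula for $a_k(\MM{n}{r})$ directly, which is a routine binomial computation using example-style counting of ranks.

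For $n\geq3$ and $2\leq r\leq n-2$, we use the standard fact (Tutte) that a connected matroid with at least two elements has an element $e$ such that $M\setminus e$ or $M/e$ is connected. Since $M$ is connected with $n\geq2$, $e$ is neither a loop nor a coloop, so lemma~\ref{lem:contract-delete} gives $a_k(M)=a_k(M\setminus e)+a_{k-1}(M/e)$. We then split into two cases.

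In the first case $M\setminus e$ is connected, of size $n-1$ and rank $r$. The induction hypothesis bounds $a_k(M\setminus e)$ by $a_k(\MM{n-1}{r})$. The minor $M/e$ has no coloops, since contraction creates none and $M$ has none. Lemma~\ref{lem:upper-nocoloops} then bounds $a_{k-1}(M/e)$ by $a_{k-1}(\UM{r}{r-1}\oplus\UM{\ell-1}{0})$.

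In the second case $M/e$ is connected, of size $n-1$ and rank $r-1$. Symmetrically, we use induction on $M/e$ and lemma~\ref{lem:upper-noloops} on the loopless matroid $M\setminus e$. This case is alternatively obtained from the first by duality, via the relation $a_{n-k}(M^\star)=a_k(M)+(k-r)\binom{n}{k}$ and the fact that $\MM{n}{r}^\star\cong\MM{n}{n-r}$.

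In both cases we must check that the sum of the two bounds equals the claimed expression $r\binom{n-1}{k}-\binom{n}{k}+\binom{r}{k}+\binom{n-r}{n-k}-\delta_{k,r}$. This is Pascal-rule bookkeeping; the $\delta$ terms should match because the $\delta_{k,r}$ for size $n-1$ combines with the boundary terms of the loop/coloop lemmas. Consistency is also supported by the observation that $\MM{n}{r}$ itself admits an element $e$, one of the parallel edges, with $\MM{n}{r}\setminus e\cong\MM{n-1}{r}$ and $\MM{n}{r}/e\cong\UM{r}{r-1}\oplus\UM{\ell-1}{0}$. If the arithmetic mismatches in one case, we would instead always choose $e$ so as to land in the first case, or appeal to duality.

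For the equality statement, equality at $2\leq k\leq n-2$ forces equality in both constituent bounds. In the first case this gives $a_{k-1}(M/e)$ extremal with $1\leq k-1\leq n-3=(n-1)-2$, so lemma~\ref{lem:upper-nocoloops} yields $M/e\cong\UM{r}{r-1}\oplus\UM{\ell-1}{0}$. We then reconstruct $M$ as in the proof of lemma~\ref{lem:upper-noloops}. The $\ell-1$ loops of $M/e$ are elements parallel to $e$ in $M$. The remaining $r$ elements together with $e$ form a circuit-like structure: since $M/e$ restricted to them is $\UM{r}{r-1}$, they form an $(r+1)$-circuit with $e$ in $M$, given that $M$ is connected. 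This is exactly $\MM{n}{r}$. The second case is dual.

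I expect the main obstacle to be this reconstruction step, i.e.\ showing rigorously that $M$ is determined by $M/e$ together with connectivity. A secondary difficulty is ensuring that the index ranges for the equality clauses (e.g.\ $k=2$ versus $k=n-2$) fall within the ranges where the cited lemmas give uniqueness. Where one case fails at an endpoint $k$, we would switch to the dual case.
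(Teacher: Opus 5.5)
Your proposal is correct and is essentially the paper's proof. Both argue by induction on $n$ using Tutte's fact that $M\setminus e$ or $M/e$ stays connected, and bound the two terms of lemma~\ref{lem:contract-delete} by the induction hypothesis together with lemma~\ref{lem:upper-nocoloops} or lemma~\ref{lem:upper-noloops}. In the equality case both force $M/e\cong\UM{r}{r-1}\oplus\UM{\ell-1}{0}$, or its dual.

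A few small remarks. The Pascal bookkeeping closes in both cases; the $\delta$ term simply carries through, using $\delta_{k-1,r-1}=\delta_{k,r}$ in the second case, so no fallback is needed. That is fortunate, because "always choose $e$ to land in case 1" is not generally possible, though your duality fallback would work. The parallel-edge decomposition of $\MM{n}{r}$ is not just a consistency check: it is the step that proves $a_k(\MM{n}{r})$ equals the closed formula. Finally, your reconstruction via the circuit $B$ of $M/e$, with connectivity ruling out $B$ itself being a circuit of $M$, is a slightly more direct variant of the paper's coloop-based argument.
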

\begin{proof}
    If $r=1$ or $r=n-1$, then $\MM{n}{r}\cong\UM{n}{r}$ and the claim reduces to lemma~\ref{lem:upper-noloops} or \ref{lem:upper-nocoloops}. Henceforth we assume that $2\leq r\leq n-2$ and thus $n\geq 4$.

    Pick any element $e\in M$. Then at least one of $M\setminus e$ and $M/e$ is connected, see \cite[\S6.5]{Tutte:ConnectivityMatroids}. Suppose that $M\setminus e$ is connected. Since $M$ is connected and $n=\abs{M}\geq 2$, $M$ and therefore also $M/e$ have no coloops. By induction on $n$ and lemmas~\ref{lem:contract-delete} and \ref{lem:upper-nocoloops}, we obtain
    \begin{align*}
        a_k(M) 
        &= a_k(M\setminus e)+a_{k-1}(M/e)
        \leq a_k(\MM{n-1}{r})+a_{k-1}(\UM{r}{r-1}\oplus \UM{\ell-1}{0}) \\
        &= r\binom{n-2}{k}-\binom{n-1}{k}+\binom{r}{k}+\binom{n-1-r}{n-1-k}-\delta_{k,r} \\
        &\quad+ (r-1)\binom{n-2}{k-1} -\binom{n-2}{n-k}+ \binom{n-r-1}{n-k} \\
        &= r\binom{n-1}{k}-\binom{n}{k}+\binom{r}{k}+\binom{n-r}{n-k}-\delta_{k,r}
        \,.
    \end{align*}
    For the minimal matroid $M=\MM{n}{r}$ and suitable $e$, we get $M\setminus e\cong \MM{n-1}{r}$ and $M/e\cong \UM{r}{r-1}\oplus\UM{\ell-1}{0}$, and thus the inequality becomes an equality. If $M\setminus e$ is not connected, then $M/e$ must be connected. Because $M\setminus e$ is loopless, we can then proceed via
    \begin{equation}\tag{$\ast$}\label{eq:conn-proof-deldisc}
        a_k(M) 
        = a_k(M\setminus e)+a_{k-1}(M/e)
        \leq a_k(\UM{r-1}{r-1}\oplus \UM{\ell}{1})+a_{k-1}(\MM{n-1}{r-1})
    \end{equation}
    and a very similar calculation as before reaches the same conclusion. This concludes the proof of the bound $a_k(M)\leq a_k(\MM{n}{r})$ and the formula for $a_k(\MM{n}{r})$.

    It remains to show that $a_k(M)=a_k(\MM{n}{r})$ for any $2\leq k\leq n-2$ implies $M\cong \MM{n}{r}$. Suppose that $M\setminus e$ is connected, hence $M/e$ free of coloops. To saturate the inequality from above, we must have $a_{k-1}(M/e)=a_{k-1}(\UM{r}{r-1}\oplus \UM{\ell-1}{0})$. By lemma~\ref{lem:upper-nocoloops}, this implies $M/e\cong \UM{r}{r-1}\oplus \UM{\ell-1}{0}$, which has $\ell-1=n-r-1\geq 1$ loops. Since $M$ has no loops, the loops in $M/e$ arise only from elements parallel to $e$. Therefore, $M$ must have a parallel class $P\subset M$ of size $\ell$ that contains $e\in P$. Its complement $B=M\setminus P$ has size $r$ and thus $\rk(M|_B)\leq r$. Connectivity of $M$ requires $r<\rk(M|_P)+\rk(M|_B)$, so $\rk(M|_B)> r-1$ and we conclude that $B$ is a basis of $M$. Since $M$ has no coloops, every $i\in B$ lies in some circuit $C_i\subseteq M$. Since $B$ is independent, we must have $C_i\cap P\neq\emptyset$, and since we can swap any element of $P$ for $e$ we may choose the circuit $C_i$ such that $C_i\subseteq B\sqcup\{e\}$. But $B\sqcup \{e\}$ has rank one and contains a \emph{unique} circuit, so $C_i=C$ is the same for all $i\in B$. It follows that $C=B\sqcup\{e\}\cong\UM{r+1}{r}$ is a circuit of $M$ and thus $M$ arises from $C$ by parallel extension of $e$ to $P$. In other words, $M\cong \UM{r+1}{r}\oplus_2 \UM{\ell+1}{1}\cong \MM{n}{r}$.

    If $M\setminus e$ is not connected, then to saturate the inequality \eqref{eq:conn-proof-deldisc} we need in particular that $a_k(M\setminus e)=a_k(\UM{r-1}{r-1}\oplus\UM{\ell}{1})$. Lemma~\ref{lem:upper-noloops} gives $M\setminus e\cong \UM{r-1}{r-1}\oplus\UM{\ell}{1}$ and we conclude $M\cong\MM{n}{r}\cong\MM{n}{n-r}^\star$ by duality from the discussion in the previous paragraph.
\end{proof}

\begin{proof}[{Proof of Proposition~\ref{prop:MM}}]
    For $r=1$ or $r=n-1$, the claim reduces to Corollary~\ref{cor:ehr0-coloopless}. For $2\leq r\leq n-2$, the bound $\ehr_M'(0)\geq\ehr_{\MM{n}{r}}'(0)$ follows from \eqref{eq:ehr0-ak} and lemma~\ref{lem:upper-conn}, and since e.g.\ $a_2(M)$ contributes with strictly negative coefficient, the equality $\ehr_M'(0)=\ehr_{\MM{n}{r}}'(0)$ requires that $a_2(M)=a_2(\MM{n}{r})$ and therefore $M\cong\MM{n}{r}$. All that remains is to confirm the actual value of the linear Ehrhart coefficient of the minimal matroids. By \eqref{eq:ehr0-ak},
    \begin{align*}
        \ehr'_{\MM{n}{r}}(0) 
        &= r\HN_{n-1} - \sum_{k=1}^{n-1} \frac{1}{(n-1)\binom{n-2}{k-1}} \left[ r\binom{n-1}{k}-\binom{n}{k}+\binom{r}{k}+\binom{n-r}{n-k}-\delta_{k,r} \right] \\
        &=2\HN_{n-1}+\frac{1}{(n-1)\binom{n-2}{r-1}} -\sum_{k=1}^{n-1} \frac{1}{(n-1)\binom{n-2}{k-1}} \left[ \binom{r}{k}+\binom{n-r}{k} \right] 
    \end{align*}
    where $r\HN_{n-1}$ cancelled the sum of $r\binom{n-1}{k}/\big[(n-1)\binom{n-2}{k-1}\big]=r/k$, $2\HN_{n-1}$ arose from the sum of $\binom{n}{k}/\big[(n-1)\binom{n-2}{k-1}\big]$ as in \eqref{eq:simple-harmonic-sum}, and we renamed $k\mapsto n-k$ in the sum with $\binom{n-r}{n-k}$. Now the claimed formula (which one could also have obtained from \cite[Theorem~1.6]{Ferroni:EhrhartMinimal})
    \begin{equation*}
        \ehr'_{\MM{n}{r}}(0)=\HN_{r-1}+\HN_{\ell-1}+\frac{1}{(n-1)\binom{n-2}{r-1}}
    \end{equation*}
    follows from the lemma below. This concludes the proof of Proposition~\ref{prop:MM}.
\end{proof}
\begin{lemma}
    For all integers $n>r>0$, we have the identity
    \begin{equation*}
        \sum_{k=1}^r \frac{1}{(n-1)\binom{n-2}{k-1}}\binom{r}{k} = \frac{1}{n-1}+\ldots+\frac{1}{n-r}=\HN_{n-1}-\HN_{n-1-r}.
    \end{equation*}
\end{lemma}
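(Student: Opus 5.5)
The plan is to rewrite each summand in a form that telescopes. Since $(n-1)\binom{n-2}{k-1}=k\binom{n-1}{k}$, the summand equals
\begin{equation*}
    \frac{\binom{r}{k}}{k\binom{n-1}{k}}.
\end{equation*}
We then use the Beta integral
\begin{equation*}
    \frac{1}{k\binom{n-1}{k}}=\int_0^1 t^{k-1}(1-t)^{n-1-k}\,dt,
\end{equation*}
valid for $1\leq k\leq n-1$. This covers all $k\leq r<n$.

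Summing over $k$ and pulling the sum inside the integral gives
\begin{equation*}
    \sum_{k=1}^r \binom{r}{k}\int_0^1 t^{k-1}(1-t)^{n-1-k}\,dt
    =\int_0^1 \frac{(1-t)^{n-1-r}}{t}\Big[\sum_{k=1}^r\binom{r}{k}t^k(1-t)^{r-k}\Big]dt.
\end{equation*}
By the binomial theorem the bracket equals $1-(1-t)^r$. The integral therefore becomes
\begin{equation*}
    \int_0^1 \frac{(1-t)^{n-1-r}-(1-t)^{n-1}}{t}\,dt.
\end{equation*}

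Substitute $u=1-t$ and write
\begin{equation*}
    \frac{u^{m}-u^{n-1}}{1-u}=u^m+u^{m+1}+\ldots+u^{n-2},\qquad m=n-1-r.
\end{equation*}
Integrating term by term yields
\begin{equation*}
    \frac{1}{m+1}+\ldots+\frac{1}{n-1}=\frac{1}{n-r}+\ldots+\frac{1}{n-1},
\end{equation*}
which is $\HN_{n-1}-\HN_{n-1-r}$, as claimed.

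The only delicate point is checking the Beta integral normalization, $B(k,n-k)=\frac{(k-1)!\,(n-k-1)!}{(n-1)!}=\frac{1}{k\binom{n-1}{k}}$, and confirming that $n-k\geq1$ throughout the range of summation.

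An alternative route is induction on $r$ using Pascal's rule $\binom{r}{k}=\binom{r-1}{k}+\binom{r-1}{k-1}$, which would require a companion identity. This is also how the final step of the paper's own derivation (Sury–Wang–Zhao) might be reused. The integral approach is cleaner, so I would present that one.
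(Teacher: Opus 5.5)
Your proof is correct, and it takes a different route from the paper's. The normalization $\frac{1}{k\binom{n-1}{k}}=\int_0^1 t^{k-1}(1-t)^{n-1-k}\,dt$ is right, and the range $1\leq k\leq r\leq n-1$ is fine. The integrand is a polynomial, so the case $r=n-1$ (i.e.\ $m=0$) needs no separate treatment.

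The paper instead argues by induction on $r$. Writing $S_{n,r}$ for the left-hand side, it applies Pascal's rule $\binom{r}{k}=\binom{r-1}{k}+\binom{r-1}{k-1}$. It then uses $\binom{n-2}{r-1}\binom{r-1}{k-1}=\binom{n-2}{k-1}\binom{n-1-k}{r-k}$ and the hockey-stick identity to get $S_{n,r}-S_{n,r-1}=\frac{1}{n-r}$. So the ``alternative route'' you mention at the end is essentially the paper's proof, and the companion identity you anticipate is exactly that pair of steps. That proof does not use Sury--Wang--Zhao.

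Your Beta-integral approach turns the reciprocal binomials into moments. The binomial theorem then collapses the whole sum at once, and the harmonic numbers come directly from the geometric series in $\int_0^1 \frac{u^{m}-u^{n-1}}{1-u}\,du$. No induction is needed, and it makes transparent why a difference of harmonic numbers appears.

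The paper's argument stays entirely within finite binomial-coefficient manipulations. Yours is shorter and more conceptual. The same trick also proves the identity $\sum_{k=0}^{n-1-b}\binom{n-1-b}{k}\frac{(-1)^{k}}{b+k}=\frac{1}{b\binom{n-1}{b}}$ used in the proof of Theorem~\ref{thm:Dehr0}: expand $(1-t)^{n-1-b}$ inside $\int_0^1 t^{b-1}(1-t)^{n-1-b}\,dt$. So your method would handle both reciprocal-binomial computations in the paper uniformly.
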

\begin{proof}
    Let $S_{n,r}$ denote the sum on the left-hand side. At $r=1$, the sum collapses to the term $k=1$ and $S_{n,1}=\frac{1}{n-1}$ is trivial. For $r\geq 2$ we can use $\binom{r}{k}=\binom{r-1}{k}+\binom{r-1}{k-1}$ and $\binom{n-2}{r-1}\binom{r-1}{k-1}=\binom{n-2}{k-1}\binom{n-1-k}{r-k}$ to obtain the recurrence
    \begin{equation*}
        S_{n,r}-S_{n,r-1}
        = \frac{1}{n-1}\sum_{k=1}^{r} \frac{\binom{r-1}{k-1}}{\binom{n-2}{k-1}}
        = \frac{1}{(n-1)\binom{n-2}{r-1}}\sum_{k=1}^{r} \binom{n-1-k}{n-r-1}
        =\frac{1}{n-r}
    \end{equation*}
    from the hockey stick identity. Inductively, this proofs the lemma.
\end{proof}

\bibliography{refs}

\end{document}